\newtheorem{theorem}{Theorem}[section]
\newtheorem{lemma}[theorem]{Lemma}
\newtheorem{proposition}[theorem]{Proposition}
\newtheorem{conjecture}[theorem]{Conjecture}
\title{Degree sequences of triangular multigraphs}
\author{John Talbot\thanks{Department of Mathematics, UCL, UK. Email: {\tt j.talbot@ucl.ac.uk}} \and Jun Yan\thanks{Mathematics Institute, University of Warwick, UK. Email: {\tt jun.yan@warwick.ac.uk}. Supported by the Warwick Mathematics Institute Centre for Doctoral Training and funding from the UK EPSRC (Grant number: EP/W523793/1).}}
\date{}
\begin{document}

\maketitle

\abstract{A simple graph is \emph{triangular} if every edge is contained in a triangle. A sequence of integers is \emph{graphical} if it is the degree sequence of a simple graph. Egan and Nikolayevsky \cite{EN} recently conjectured that every graphical sequence whose terms are all at least 4 is the degree sequence of a triangular simple graph, and proved this in some special cases. In this paper we state and prove the analogous version of this conjecture for multigraphs.}

\section{Introduction}
A graph is \emph{simple} if it does not contain any loops or multiple edges. A sequence of integers $(d_1,\cdots,d_n)$ is \emph{graphical} if there exists a simple graph $G$ on vertices $v_1,\cdots,v_n$ such that $\deg(v_i)=d_i$ for all $i\in[n]$. The well-known Erd\H{o}s-Gallai Theorem provides a complete characterisation of graphical sequences. 
\begin{theorem}[Erd\H{o}s-Gallai Theorem \cite{EG}]\label{erdosgallai}
A sequence of positive integers $d_1\geq\cdots\geq d_n$ is graphical if and only if 
\begin{itemize}
\item $d_1+\cdots+d_n$ is even, 
\item $\sum_{i=1}^kd_i\leq k(k-1)+\sum_{i=k+1}^n\min\{d_i,k\}$ for every $k\in[n]$. 
\end{itemize}
\end{theorem}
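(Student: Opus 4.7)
The plan is to handle the two directions of the biconditional separately.

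\emph{Necessity.} Suppose the sequence is realised by a simple graph $G$ with $V(G) = \{v_1, \dots, v_n\}$ and $\deg(v_i) = d_i$. The parity condition follows from the handshake lemma. For the $k$-th inequality, set $S = \{v_1, \dots, v_k\}$ and count edge-endpoints meeting $S$:
\[
\sum_{i=1}^k d_i \;=\; 2\,e(G[S]) + e_G(S, V \setminus S).
\]
Since $e(G[S]) \le \binom{k}{2}$, the first term is at most $k(k-1)$. For each $v_i$ with $i > k$, the contribution $|N_G(v_i) \cap S|$ is bounded above by both $d_i$ and $k$, yielding the claimed $\min\{d_i, k\}$.

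\emph{Sufficiency.} I would argue by induction on $n + \sum_{i=1}^n d_i$, with the empty sequence (or the all-zero sequence) as base case. Assuming $d_1 \ge 1$, I would apply the Havel--Hakimi reduction: form $\sigma'$ from $(d_2, \dots, d_n)$ by subtracting $1$ from each of the first $d_1$ entries. After reordering $\sigma'$ non-increasingly, verify that it still satisfies the Erd\H{o}s--Gallai conditions; the inductive hypothesis then yields a realisation $G'$ of $\sigma'$, and adjoining a new vertex $v_1$ adjacent to the $d_1$ vertices whose degrees were decremented produces a realisation of the original sequence.

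\emph{Main obstacle.} The technical heart of the proof is verifying that $\sigma'$ still satisfies the Erd\H{o}s--Gallai inequalities. For each new threshold $k$, the reduction alters both sides: the left-hand side $\sum_{i=1}^k d'_i$ decreases by at most $\min(k, d_1)$ from the subtractions, while $\sum_{i>k} \min\{d'_i, k\}$ can shrink when a term with $d_i = k+1$ drops to $k$, and the non-increasing reordering may additionally shuffle which entries are counted. I expect the case split $k < d_1$, $k = d_1$, $k > d_1$ together with the slack present in the original inequality at indices near $d_1$ to close the argument, but the bookkeeping around the reordering is the delicate step and is where I would spend most of the effort.
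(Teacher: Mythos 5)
The paper does not actually prove this statement: it is the classical Erd\H{o}s--Gallai theorem, quoted from \cite{EG} and used as a black box, so there is no internal proof to compare against. Judged on its own terms, your necessity argument is complete and correct: the identity $\sum_{i=1}^k d_i = 2e(G[S]) + e_G(S, V\setminus S)$ with $S=\{v_1,\dots,v_k\}$, together with $e(G[S])\le\binom{k}{2}$ and $\lvert N_G(v_i)\cap S\rvert\le\min\{d_i,k\}$ for $i>k$, is exactly the standard proof of that direction.

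The sufficiency direction has a genuine gap, and you have correctly located it yourself: the claim that the Havel--Hakimi reduction $\sigma\mapsto\sigma'$ preserves the Erd\H{o}s--Gallai inequalities is essentially the entire content of the hard direction of the theorem, and you state it as an expectation rather than establish it. Everything else in your outline (the induction scheme, the parity bookkeeping $\sum_i d'_i=\sum_i d_i-2d_1$, adjoining $v_1$ to the decremented vertices) is routine; the theorem lives or dies on that one verification, which involves precisely the case analysis and reordering subtleties you flag and defer. Until that step is written out, this is a plan rather than a proof. For what it is worth, the published inductive proofs of sufficiency typically take a different reduction where the preservation check is lighter --- for instance Choudum's induction on $\sum_i d_i$, which decrements $d_1$ and $d_n$ simultaneously --- or avoid induction on the sequence altogether, as in the constructive proof of Tripathi, Venugopalan and West via subrealizations. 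Your route can be made to work, but the bookkeeping you postpone is where all of the difficulty sits, so the proposal cannot be accepted as a proof in its current form.
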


A \emph{triangle} in a simple graph consists of three distinct vertices that are pairwise adjacent. A simple graph is \emph{triangular} if every edge is contained in a triangle. Recently, Egan and Nikolayevsky \cite{EN} conjectured that any positive integer sequence whose terms are all at least 4, and satisfies the obvious necessary condition of being graphical, is the degree sequence of a triangular simple graph. By the Erd\H{o}s-Gallai Theorem, this is equivalent to the following. 
\begin{conjecture}[Egan and Nikolayevsky \cite{EN}]\label{conj}
If $n\geq 3$ and $(d_1,\cdots,d_n)$ is a sequence of integers satisfying
\begin{itemize}
    \item $d_1\geq\cdots\geq d_n\geq4$,
    \item $d_1+\cdots+d_n$ is even,
    \item $\sum_{i=1}^kd_i\leq k(k-1)+\sum_{i=k+1}^n\min\{d_i,k\}$ for every $k\in[n]$,
\end{itemize} 
then it is the degree sequence of a triangular simple graph. 
\end{conjecture}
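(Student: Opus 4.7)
The natural setting is induction on $n$, with the Erd\H{o}s-Gallai theorem guaranteeing at least one simple realisation to start from. The base case is $n\le 5$: since any simple graph on at most four vertices has maximum degree at most three, the hypotheses force $n=5$ with $d_1=\cdots=d_5=4$, whose unique realisation is $K_5$, which is triangular. So we may assume $n\ge 6$.

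For the inductive step my first attempt is a \emph{triangle reduction}: pick three indices $i<j<k$, replace $(d_i,d_j,d_k)$ by $(d_i-2,d_j-2,d_k-2)$, invoke induction on the resulting sequence to obtain a triangular realisation $G'$, and then add a new triangle on the three corresponding vertices of $G'$. Two conditions must hold for this to work: the reduced sequence must still satisfy the hypotheses of Conjecture~\ref{conj} (in particular none of the three chosen entries may drop below $4$), and the three added edges must not already be present in $G'$. The first suggests picking $i,j,k$ among the largest entries, while the second can be arranged by a local $2$-swap in $G'$. The delicate point is verifying the Erd\H{o}s-Gallai inequalities after the reduction, since they are global and can fail at prefixes that do not involve $i,j,k$; one would have to argue case-by-case depending on whether $\{i,j,k\}$ lies in the leading prefix $\{1,\dots,k\}$ for each $k$.

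An alternative strategy is to fix any Erd\H{o}s-Gallai realisation $G_0$ and perform degree-preserving $2$-swaps $uv,\,xy\mapsto ux,\,vy$ aimed at reducing the \emph{defect} $D(G)$, defined as the number of edges of $G$ not lying in any triangle. Since every vertex has degree at least $4$, any two defect edges sharing a vertex $v$ leave plenty of neighbours of $v$ available for swapping, so locally one always has room to manoeuvre. The trick is to ensure a net decrease in $D$, for which a weighted potential (e.g.\ defect plus a small penalty for edges lying in exactly one triangle) may be needed, since a naive swap can kill an existing triangle while creating a new defect edge.

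The main obstacle, in either approach, is the regime where many $d_i$ equal $4$. Here each such vertex is ``tight''---its four incident edges must each lie in a triangle, forcing at least two edges among its neighbours---which severely constrains the local structure and leaves essentially no slack in the Erd\H{o}s-Gallai inequalities. Ruling out the near-counterexamples that appear if one weakens the hypothesis to $d_n\ge 3$ (e.g.\ $K_{3,3}$, which is $3$-regular and triangle-free) genuinely requires the density afforded by $d_i\ge 4$, so a complete proof must marry a robust inductive step with an ad hoc structural treatment of the $4$-regular or nearly-$4$-regular case. I expect the first tractable subcase to be $d_n\ge 5$, where the extra degree of freedom makes both the triangle reduction and the swap argument much cleaner, with the pure-$4$-case handled afterwards by direct construction from $K_5$'s and octahedra.
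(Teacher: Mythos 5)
This statement is a \emph{conjecture} in the paper: the authors do not prove it, they only cite Egan and Nikolayevsky's partial result for sequences with at most two distinct values and then prove the multigraph analogue (Theorem \ref{main}) instead. Your proposal does not close the gap either --- it is a research plan with the decisive steps explicitly left open, so it cannot be accepted as a proof. Concretely: the triangle-reduction induction fails exactly where you predict it will. Subtracting $2$ from an entry equal to $4$ or $5$ violates the hypothesis $d_n\geq 4$, so in the $4$-regular or nearly-$4$-regular regime there is no admissible choice of $i,j,k$ at all, and even when all chosen entries are at least $6$ you acknowledge (correctly) that you have not verified the Erd\H{o}s--Gallai inequalities for the reduced sequence; these can genuinely tighten, since reducing three large entries lowers the right-hand side $\sum_{i>k}\min\{d_i,k\}$ for small $k$ as well as the left-hand side. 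The swap argument is likewise incomplete: you name the failure mode (a swap that destroys a triangle while repairing a defect edge) but do not exhibit a potential that provably decreases, and the ``ad hoc structural treatment'' of the tight case and the ``direct construction from $K_5$'s and octahedra'' are not carried out. Nothing here goes beyond what is already known to be hard about the conjecture.

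It is worth noting why the paper sidesteps all of this: in the multigraph setting the Erd\H{o}s--Gallai system collapses to the single inequality $d_1\leq\sum_{i=2}^n(d_i-1)$, which is stable under the authors' decomposition of the sequence (guided by the alternating sum $D=\sum_i(-1)^{i-1}d_i$) into a fan-shaped piece and a squared-cycle piece glued at $v_1$. That explicit-construction strategy has no obvious analogue for simple graphs, precisely because multiplicities cannot absorb the excess degree; so the simple-graph statement remains open, and your write-up should present it as such rather than as a theorem with a proof.
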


Egan and Nikolayevsky \cite{EN} proved this conjecture in the case when the degree sequence contains at most two distinct terms. 
\begin{theorem}[Egan and Nikolayevsky \cite{EN}]
Any graphical sequence of the form $(a^p,b^q)$, where $a>b\geq4$ and $p\geq0,q>0$ is the degree sequence of a triangular simple graph. 
\end{theorem}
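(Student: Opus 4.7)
The plan is to produce the required triangular simple graph by explicit construction, treating separately the regular case $p = 0$ and the non-regular case $p \geq 1$.

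\textbf{Regular case ($p=0$).} Here the sequence is $(b^q)$ with $b \geq 4$ and $qb$ even, and graphicality forces $q \geq b+1$. When $b$ is even, take the circulant $C_q(1, 2, \ldots, b/2)$: this is $b$-regular, and since $\{1, 2\} \subseteq \{1, \ldots, b/2\}$, every edge $\{v, v+j\}$ lies in the triangle $\{v, v+1, v+j\}$ or $\{v-1, v, v+j\}$. When $b$ is odd, then $q$ must be even, and I would start from $C_q(1, 2, \ldots, (b+1)/2)$ (which is $(b+1)$-regular) and remove a perfect matching from the subgraph $C_q((b+1)/2)$; the short-distance triangles $\{v, v+1, v+2\}$ are preserved because no $1$- or $2$-step edges are removed. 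For the residue classes of $q$ modulo $(b+1)$ where this matching does not exist (the long-step cycles have odd length), I would fall back on $C_q(1, \ldots, (b-1)/2)$ together with a perfect matching of step lengths in $\{(b-1)/2+1, \ldots, b\}$, which I can verify lies in triangles with the circulant neighbours.

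\textbf{Non-regular case ($p\geq1$).} Let $A$ and $B$ be the sets of $p$ vertices of degree $a$ and $q$ vertices of degree $b$. The strategy is to build on a $b$-regular triangular graph $G_0$ on $n = p + q$ vertices (from the regular case, correcting parity by a local modification if $nb$ is odd). I would place $A$ as a consecutive arc in the circulant and add $(a-b)$ additional edges at each $A$-vertex, preferably inside $A$. If $a - b \leq p - 1$ and $(a - b)p$ is even, I add an $(a-b)$-regular graph on $A$; these $A$-internal edges are automatically triangular because, with $A$ contiguous, their endpoints share common neighbours in $G_0$ and in the added $A$-structure. If these constraints fail, I rebalance via two-switches that move half-edges between $A$ and $B$, preserving all degrees and, with careful choice, preserving triangularity using the high local density of $G_0$ from $b \geq 4$.

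\textbf{Main obstacle.} The principal difficulty is the boundary cases $p = 1$ and $p = 2$, where $A$ cannot absorb the excess degree internally. For $p = 1$, the single high-degree vertex must take all $a$ of its neighbours in $B$, so I would construct the graph as a cone-like attachment to $a$ consecutive $B$-vertices over a suitable $(b-1)$- or $b$-regular triangular circulant, adjusting a few edges in $B$ near the cone to compensate. For $p = 2$, only one $A$-$A$ edge is available, so the remaining excess must be distributed as disjoint augmenting structures in $B$; maintaining triangularity under this distribution is the technical heart. I expect the proof to reduce in these boundary regimes to a finite case analysis on the parities of $a - b$ and $n$, and on divisibility of $q$ by small integers, each subcase handled by an explicit circulant-based construction whose triangularity follows from the Erd\H{o}s-Gallai inequality bounding $a$ in terms of $n$.
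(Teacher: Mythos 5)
This statement is quoted from Egan and Nikolayevsky \cite{EN} as background; the present paper does not prove it, so there is no in-paper proof to compare against. Judged on its own terms, what you have written is a construction plan with explicitly acknowledged holes rather than a proof. The regular case is broadly plausible for $b$ even (the circulant $C_q(1,\ldots,b/2)$ does work, using $q\geq b+1$ from graphicality), but already for $b$ odd you leave the key step unverified: the long-step subgraph $C_q((b+1)/2)$ has a perfect matching only when its cycles are even, and your fallback "perfect matching of step lengths in $\{(b-1)/2+1,\ldots,b\}$" is not well defined on a circulant of $q$ vertices (steps only make sense up to $q/2$, and when $a$ or $b$ is comparable to $q$ these steps collide), nor is it checked that such edges lie in triangles.

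The more serious gap is the non-regular case, which is the entire content of the theorem. Your main device, adding an $(a-b)$-regular graph on $A$, requires $a-b\leq p-1$ and $(a-b)p$ even, and these conditions fail in a large and typical range of parameters (e.g.\ $p=1$, or $a$ much larger than $b$). For those ranges you offer "two-switches that move half-edges between $A$ and $B$ \ldots with careful choice, preserving triangularity" and, for $p=1,2$, a "cone-like attachment" with unspecified local adjustments, concluding that you "expect the proof to reduce \ldots to a finite case analysis." Preserving triangularity under such switches is exactly the hard part of any result of this type -- a two-switch can destroy the unique triangle containing some edge -- and no argument is given for why it can always be done. Until the rebalancing step and the $p\in\{1,2\}$ regimes are carried out explicitly, with the Erd\H{o}s--Gallai inequality at $k=p$ actually invoked to control how much excess degree must leave $A$, this remains a programme rather than a proof.
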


In this paper, we state and prove the analogous version of Conjecture \ref{conj} for multigraphs. A \emph{triangle} in a multigraph consists of three distinct vertices which are pairwise adjacent. A multigraph is \emph{triangular} if every edge is contained in a triangle. The following lemma provides two necessary conditions for the degree sequences of triangular multigraphs.  
\begin{lemma}\label{triangularnecessary}
If $n\geq 3$ and $d_1\geq\cdots\geq d_n>0$ is the degree sequence of a triangular multigraph on $n$ vertices, then 
\begin{itemize}
    \item $\sum_{i=1}^nd_i$ is even,
    \item $d_1\leq\sum_{i=2}^n(d_i-1)$. 
\end{itemize}
\end{lemma}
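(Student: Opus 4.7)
The first bullet is immediate from the handshake identity for multigraphs: every edge contributes exactly $2$ to $\sum_{i} d_i$, so this sum is automatically even, and the triangularity hypothesis plays no role.

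For the second bullet, my plan is to pick a vertex $v$ realising the maximum degree $d_1$ and exploit triangularity locally at $v$. Write $N(v)$ for the set of distinct neighbours of $v$ in the underlying simple graph, and for each $u\in N(v)$ let $m_u$ denote the multiplicity of the edge $uv$, so that $d_1=\sum_{u\in N(v)} m_u$. The key observation is that, because some edge of the form $uv$ lies in a triangle $uvw$, the third vertex $w$ must be distinct from both $u$ and $v$ by definition of triangle; hence $u$ is incident to at least one edge leaving $\{v\}$, giving $\deg(u)\ge m_u+1$.

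Summing this inequality over $u\in N(v)$ yields
\[
d_1=\sum_{u\in N(v)} m_u\;\le\;\sum_{u\in N(v)}(\deg(u)-1).
\]
To pass to the full sum $\sum_{i=2}^n(d_i-1)$ appearing in the statement, I note that each non-neighbour of $v$ contributes a nonnegative term $d_i-1\ge 0$, using the hypothesis $d_n>0$. Hence $d_1\le\sum_{i=2}^n(d_i-1)$ as required.

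I do not anticipate a substantive obstacle: the whole argument is a one-line pigeonhole at a maximum-degree vertex. The only point requiring care is that the witness $w$ triangulating an edge $uv$ is genuinely a third vertex distinct from $u$ and $v$, so that it really supplies a neighbour of $u$ other than $v$ and the inequality $\deg(u)\ge m_u+1$ is legitimate.
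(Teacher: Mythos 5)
Your proof is correct and follows essentially the same route as the paper's: both arguments rest on the observation that triangularity forces every neighbour of the maximum-degree vertex to send at least one edge to a third vertex, so that $\deg(u)\ge m_u+1$, and the non-neighbours contribute nonnegatively since all degrees are positive. The only unstated point is that a triangular multigraph is automatically loopless (a loop cannot lie in a triangle), which your identity $d_1=\sum_{u\in N(v)}m_u$ implicitly uses; the paper makes this explicit.
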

\begin{proof}
$\sum_{i=1}^nd_i$ is even from the well-known handshake lemma. Now suppose for a contradiction that $d_1>\sum_{i=2}^n(d_i-1)$ and $G$ is a triangular multigraph on vertices $v_1,\cdots,v_n$ satisfying $\deg(v_i)=d_i$ for all $i\in[n]$. Note that any triangular multigraph is necessarily loopless. If for every $2\leq i\leq n$, $v_i$ is adjacent to a vertex that is not $v_1$, then since $G$ is loopless, $\deg(v_1)\leq\sum_{i=2}^n(d_i-1)<d_1$, contradiction. Hence, there must exist some $2\leq i\leq n$ such that $v_i$ is only adjacent to the vertex $v_1$. Since $d_i>0$, the edge $v_1v_i$ has positive multiplicity, but cannot be in a triangle, contradicting $G$ is triangular.
\end{proof}


Our  main result is that the analogue of Conjecture \ref{conj} holds for multigraphs. Any sequence of $n \ge 3$ integers, each at least 4, and satisfying the obvious necessary conditions in Lemma \ref{triangularnecessary} is the degree sequence of a triangular multigraph. 
\begin{theorem}\label{main}
If $n\geq3$ and $(d_1,\cdots,d_n)$ is a sequence of integers satisfying
\begin{enumerate}[(i)]
\item $d_1\geq\cdots\geq d_n\geq4$,\label{decreasing}
\item $\sum_{i=1}^nd_i$ is even,\label{even}
\item $d_1\leq\sum_{i=2}^n(d_i-1)$,\label{d1atmost}
\end{enumerate} 
then it is the degree sequence of a triangular multigraph. 
\end{theorem}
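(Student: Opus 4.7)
I would proceed by induction on $n$. In the base case $n=3$, I construct the target multigraph explicitly as the single triangle $v_1v_2v_3$ with edge multiplicities $m_{ij}=(d_i+d_j-d_k)/2$, where $k$ is the third index: condition (ii) guarantees integrality, condition (iii) gives $m_{23}\geq 1$, and the ordering (i) yields $m_{12},m_{13}\geq m_{23}\geq 1$, so every edge lies in the unique triangle.

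For the inductive step $n\geq 4$, my plan is to remove the smallest-degree vertex $v_n$ and distribute its $d_n$ incident edges between $v_1$ and $v_2$ as $p$ copies of $v_nv_1$ and $q=d_n-p$ copies of $v_nv_2$, for some $p,q\geq 1$ chosen so that the reduced sequence $(d_1-p,\,d_2-q,\,d_3,\ldots,d_{n-1})$ (re-sorted if necessary) still satisfies (i)--(iii) for $n-1$ vertices. The inductive hypothesis then yields a triangular multigraph $M$ on $\{v_1,\ldots,v_{n-1}\}$ realizing the reduced sequence, and appending $v_n$ with the prescribed edges gives the desired multigraph on $n$ vertices: the new edges all lie in the triangle $v_nv_1v_2$, provided $M$ contains the edge $v_1v_2$.

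There are two main obstacles. First, existence of a suitable $p$: condition (iii) for the reduced sequence, together with $p,q\geq 1$ and $d_1-p,\,d_2-q\geq 4$, translates into a window of the form
\[
\max\bigl(1,\;d_n-d_2+4,\;d_n-\floor{(R+1)/2}\bigr)\;\leq\; p\;\leq\;\min(d_n-1,\,d_1-4),
\]
where $R=\sum d_i-2d_1-(n-1)\geq 0$ is the slack in (iii) for the original sequence. This window is nonempty for most sequences, but a few extremal cases---most notably the tight case $R=0$ and the case $d_1=4$ (equivalently, all $d_i=4$)---admit no valid $p$, and would have to be dispatched by direct constructions: for instance a ``hub-plus-matching'' multigraph for $R=0$ (which forces $n$ odd by parity), in which $v_1$ is joined to each $v_i$ with multiplicity $d_i-1$ and $v_2,\ldots,v_n$ form a perfect matching with multiplicity one; and disjoint doubled triangles (or small ad hoc variants for the other residues of $n$ modulo $3$) for the $4$-regular case.

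The second, and I expect trickier, obstacle is guaranteeing that $M$ actually contains a copy of $v_1v_2$. I would address this by strengthening the inductive hypothesis to produce triangular realizations in which a designated pair of high-degree vertices is adjacent, and show that this stronger property is maintained when the reduction preserves the identity of the two top vertices; alternatively, I would try a local edge-swap argument on $M$ that introduces the edge $v_1v_2$ (by swapping two disjoint edges $v_1x$, $v_2y$ with $xv_2$, $yv_1$ or with $v_1v_2$, $xy$) while preserving both the degree sequence and triangularity. This second step is the main place where the proof could be delicate, since naive swaps may destroy existing triangles and one must argue that some locally valid swap always exists when $v_1$ and $v_2$ are non-adjacent.
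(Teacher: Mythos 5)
Your base case $n=3$ is correct, but the inductive step is a plan with two open holes rather than a proof, and the first hole is substantially larger than you acknowledge. Since you need the reduced sequence to satisfy condition (i), you require $d_1-p\geq4$ and $d_2-q\geq4$ with $p+q=d_n$, which forces $d_n\leq d_1+d_2-8$. This fails not only in the tight case $R=0$ and the $4$-regular case, but for every sequence with $d_1+d_2<d_n+8$: for example all $d$-regular sequences with $5\leq d\leq 7$, and near-regular sequences such as $(5,5,4,4)$ or $(6,6,6,5,5,4)$. None of these are covered by your proposed ad hoc constructions (the hub-plus-matching handles only $R=0$, the doubled triangles only the $4$-regular case), so a large family of sequences is left untreated; spreading the $d_n$ edges of $v_n$ over more than two neighbours would widen the window but reintroduces the problem of putting each new edge into a triangle. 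The second hole --- guaranteeing that the realization $M$ of the reduced sequence contains the edge $v_1v_2$ --- is stated but not closed: you offer a strengthened inductive hypothesis or an edge swap, but the strengthened hypothesis does not obviously propagate when re-sorting changes which two vertices have the largest degrees, and you give no argument that a triangularity-preserving swap always exists. As it stands the argument does not constitute a proof.

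For comparison, the paper avoids induction entirely. It introduces the alternating sum $D=\sum_{i=1}^n(-1)^{i-1}d_i$ (nonnegative and even under (i) and (ii)) and gives two explicit constructions: a fan with hub $v_1$ and a matching on $v_2,\cdots,v_n$ when $D\geq n-2$ (your hub-plus-matching graph is exactly the $D=n-1$ or $D=n-2$ instance of this), and a modification of the squared cycle $C_n^2$ when $D\leq4$; in the intermediate range $6\leq D\leq n-3$ it splits the sequence into two subsequences, realizes one by each construction, and glues the two multigraphs at the single shared vertex $v_1$. If you want to salvage an inductive approach, the honest assessment is that both of your obstacles require real work, and the extremal analysis you would need to do essentially rediscovers constructions of the fan and squared-cycle type.
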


As evidenced by the following proposition, we cannot replace the number 4 in condition (\ref{decreasing}) by a smaller integer.

\begin{proposition}
The degree sequence given by $d_i=3$ for all $i\in[n]$ is the degree sequence of a triangular multigraph if and only if $n$ is divisible by 4.
\end{proposition}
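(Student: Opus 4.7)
My plan is to prove the stronger statement that any $3$-regular triangular multigraph on $n$ vertices must be a vertex-disjoint union of copies of $K_4$; this implies both directions of the proposition, since $n/4$ disjoint copies of $K_4$ provide a witness whenever $4 \mid n$, and conversely any such multigraph forces $n$ to be a multiple of $4$.

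The first step will be to show that a $3$-regular triangular multigraph $G$ contains no multi-edge at all, so $G$ is in fact simple. A triple edge at a vertex $v$ is ruled out immediately because $v$'s only neighbor is its partner, leaving no third vertex to close a triangle. Suppose instead $v$ has a double edge to $u$ and a single edge to $w$; the triangle condition applied to both $vu$ and $vw$ forces $uw$ to be an edge, and this fully saturates $u$'s degree with the multiset $\{v, v, w\}$. Now $w$'s remaining neighbor $w'$ cannot be $v$ or $u$ (that would push their degree above $3$), so $w'$ is a new vertex; but then the triangle through $ww'$ requires a common neighbor of $w$ and $w'$ drawn from $\{v, u\}$, and each of $v, u$ already has its neighborhood confined to $\{v, u, w\}$, a contradiction.

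Once $G$ is known to be simple, I would pick any vertex $v$ with its three distinct neighbors $a, b, c$ and show the induced subgraph on $\{v, a, b, c\}$ must be $K_4$. The triangle condition on each edge $vx$ with $x \in \{a,b,c\}$ requires a common neighbor of $v$ and $x$ among $v$'s other neighbors, which forces a substantial number of adjacencies within $\{a,b,c\}$. If some pair, say $ac$, were missing, then $a$'s third neighbor $a'$ would lie outside $\{v,a,b,c\}$, but the triangle through $aa'$ would demand a common neighbor of $a$ and $a'$ from $\{v, b\}$, both of whose neighborhoods are already confined to $\{v,a,b,c\}$. Hence all three of $ab, ac, bc$ are edges, $\{v,a,b,c\}$ induces $K_4$, and by degree saturation this set is an entire connected component.

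The main technical difficulty will be the multi-edge elimination step, which requires careful bookkeeping of how much degree each vertex has already used and the cascade of constraints imposed by the triangle-closure condition; the $K_4$-component step is then a short follow-up using essentially the same local reasoning.
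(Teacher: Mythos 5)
Your argument is correct and matches the paper's in all essentials: both prove that every connected component of a $3$-regular triangular multigraph is a $K_4$ via the same local analysis, in which the triangle condition around a degree-$3$ vertex either forces a $K_4$ or produces a new vertex whose edge cannot lie in any triangle. You merely reorganise the two cases (eliminating multi-edges globally first, then treating the resulting simple graph) where the paper instead cases on whether some vertex of the component has three distinct neighbours.
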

\begin{proof}
Let $G$ be a triangular multigraph on $n$ vertices, all of which have degree 3. It suffices to show that every connected component of $G$ is isomorphic to $K_4$. 

Fix a connected component of $G$. Suppose there exists a vertex $v_1$ adjacent to three different vertices $v_2,v_3,v_4$. As edges $v_1v_2,v_1v_3,v_1v_4$ all need to be in triangles, we may, without loss of generality, assume edges $v_2v_3, v_2v_4$ are also in $G$. If edge $v_3v_4$ is also in $G$, then all of $v_1,v_2,v_3,v_4$ have degree $3$, so the connected component containing them is isomorphic to $K_4$. Otherwise, vertex $v_3$ is adjacent to a new vertex $v_5$. But since $v_1v_5,v_2v_5$ are not in $G$, the edge $v_3v_5$ is not in a triangle, contradiction. 

Suppose now there is no vertex in this connected component that is adjacent to three different vertices. Let $v_1$ be a vertex in this component. Either there is an edge $v_1v_2$ of multiplicity 3, which cannot be in a triangle, or we have an edge $v_1v_2$ with multiplicity 2 and an edge $v_1v_3$ with multiplicity 1. For edges $v_1v_2,v_1v_3$ to be in triangles, we must have edge $v_2v_3$ as well. One of edges $v_1v_3,v_2v_3$ must have multiplicity at least 2, as $v_3$ cannot be adjacent to three different vertices. But then one of $v_1,v_2$ will have degree at least 4, contradiction.
\end{proof}

\section{Proof of Theorem \ref{main}}
Suppose $n\ge 3$ and $(d_1,\cdots,d_n)$ is a sequence of integers satisfying (\ref{decreasing})-(\ref{d1atmost}). The goal of Theorem \ref{main} is to construct a triangular multigraph $G$ on vertices $v_1,\cdots,v_n$, such that $\deg(v_i)=d_i$ for all $i\in[n]$. It turns out that $D=\sum_{i=1}^n(-1)^{i-1}d_i$ is a critical quantity that will guide our constructions. Note that $D$ is non-negative by (\ref{decreasing}) and is even by (\ref{even}).

If $D\geq n-2$, we show in Lemma \ref{bigD} that a fan-shaped construction (see Figure \ref{fig:bigD}) works, with $v_1$ being the central vertex. If $D\leq 4$, we show in Lemma \ref{smallD} that a construction based on modifying the square of the length $n$ cycle (see Figure \ref{fig:smallD}) works. Finally, we complete the proof of Theorem \ref{main} by showing that in the intermediate case, $6\leq D\leq n-3$, a combination of the above two constructions, with $v_1$ being the unique common vertex, works. 

In order to combine these two constructions in the proof of Theorem \ref{main}, we will need to state and prove Lemma \ref{bigD} and Lemma \ref{smallD} in the slightly more general setting where we do not assume $d_1$ is the largest term of the sequence. Throughout the constructions in this section, the multiplicity of an edge $v_iv_j$ in a multigraph $G$ will be denoted by $m(v_i,v_j)$.

\begin{lemma}\label{bigD}
Let $n\geq 3$ and let $(d_1,\cdots,d_n)$ be a sequence of non-negative integers satisfying
\begin{itemize}
\item $d_2\geq\cdots\geq d_n\geq4$,
\item $d_1+\cdots+d_n$ is even,
\item $d_1\leq\sum_{i=2}^n(d_i-1)$,
\item $D=\sum_{i=1}^n(-1)^{i-1}d_i\geq n-2$,
\end{itemize}
then there exists a triangular multigraph $G$ with degree sequence $(d_1,\cdots,d_n)$.  
\end{lemma}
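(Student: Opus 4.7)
The plan is to construct a fan-shaped multigraph with $v_1$ at the centre. Specifically, $v_1$ will be joined to every other vertex by some positive number $a_i := m(v_1, v_i)$ of parallel edges, while the remaining edges are placed as a near-perfect matching among $v_2, \ldots, v_n$: when $n$ is odd I use the pairs $(v_{2j}, v_{2j+1})$ for $j = 1, \ldots, (n-1)/2$ with multiplicity $b_j \geq 1$, and when $n$ is even I use the pairs $(v_2, v_3), \ldots, (v_{n-2}, v_{n-1})$ together with an additional multi-edge $v_{n-1} v_n$ of multiplicity $c \geq 1$. In either case the multigraph is triangular essentially for free: each edge inside the matching lies in a triangle through $v_1$, and each edge $v_1 v_i$ lies in a triangle with $v_i$'s matching partner (or with $v_{n-1}$ when $i = n$ in the even case).

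Having fixed the shape, the degree equations at $v_2, \ldots, v_n$ determine $a_i$ in terms of the pair multiplicities, and $\deg(v_1) = d_1$ collapses to the single identity
\[
\sum_j b_j + c \;=\; T \;:=\; \tfrac{1}{2}\left(\sum_{i=2}^n d_i - d_1\right)
\]
(with $c = 0$ when $n$ is odd). Meanwhile, the constraints $a_i \geq 1$ translate, using $d_{2j} \geq d_{2j+1}$, into the upper bounds $b_j \leq d_{2j+1} - 1$ together with the joint bound $b_{(n-2)/2} + c \leq d_{n-1} - 1$ in the even case. Thus the problem reduces to realising $T$ as a sum of positive integers $b_j$ (and $c$) inside their individual ranges, which is possible as soon as $T$ is an integer lying in the interval $[L, U]$ whose endpoints are the sums of the individual lower and upper bounds.

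This is where the four hypotheses of the lemma come in. The evenness condition gives integrality of $T$; the lower bound $T \geq L$ is immediate from $d_1 \leq \sum_{i=2}^n (d_i - 1)$, with parity pushing the threshold up from $n - 1$ to $n$ in the even case (precisely the bound that $T$ being a half-integer requires). The upper bound $T \leq U$ simplifies, after substituting $D = d_1 - \sum_{\text{even } i \geq 2} d_i + \sum_{\text{odd } i \geq 3} d_i$, to exactly $D \geq n - 2$ when $n$ is even and $D \geq n - 1$ when $n$ is odd; the latter is automatic since $D$ is even. The main obstacle is organisational: running the algebra for both parities of $n$ in parallel, and verifying the minor compatibility check in the even case that the joint bound on $b_{(n-2)/2}$ and $c$ admits a positive-integer splitting with $c \leq d_n - 1$, which is immediate from $d_n \geq 4$.
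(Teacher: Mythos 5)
Your proposal is correct and follows essentially the same route as the paper: the same fan-shaped construction with $v_1$ joined to all other vertices and a near-perfect matching on $v_2,\dots,v_n$ (with the extra edge $v_{n-1}v_n$ when $n$ is even), with the hypotheses playing exactly the same roles (evenness for integrality, condition (iii) plus parity for the lower bound, and $D\geq n-2$, boosted to $D\geq n-1$ for odd $n$, for the upper bound). The only difference is presentational: you establish the existence of suitable multiplicities by an interval-feasibility argument, whereas the paper writes them down explicitly via a greedy choice of an index $k$ and a remainder $\delta$.
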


\begin{figure}
\centering
\begin{subfigure}{\textwidth}
\centering
\begin{tikzpicture}[scale=
2.5]
\draw ($({-cos(66)},{sin(66)})$) node(v2)[inner sep=0.3ex,circle,fill=black]{};
\node [above] at ($({-cos(66)},{sin(66)})$) {$v_2$};
\draw ($({cos(66)},{sin(66)})$) node(vn)[inner sep=0.3ex,circle,fill=black]{};
\node [above] at ($({cos(66)},{sin(66)})$) {$v_n$};
\draw ($({cos(42)},{sin(42)})$) node(vn-1)[inner sep=0.3ex,circle,fill=black]{};
\node [right] at ($({cos(42)},{sin(42)})$) {$v_{n-1}$};
\draw ($({-cos(42)},{sin(42)})$) node(v3)[inner sep=0.3ex,circle,fill=black]{};
\node [left] at ($({-cos(42)},{sin(42)})$) {$v_3$};
\draw ($({-cos(30)},{-sin(30)})$) node(vn-2k-1)[inner sep=0.3ex,circle,fill=black]{};
\node [left] at ($({-cos(30)},{-sin(30)})$) {$v_{n-2k-1}$};
\draw ($({-cos(54)},{-sin(54)})$) node(vn-2k)[inner sep=0.3ex,circle,fill=black]{};
\node [left] at ($({-cos(54)},{-sin(54)})$) {$v_{n-2k}$};
\draw ($({cos(30)},{-sin(30)})$) node(vn-2k+4)[inner sep=0.3ex,circle,fill=black]{};
\node [right] at ($({cos(30)},{-sin(30)})$) {$v_{n-2k+4}$};
\draw ($({cos(54)},{-sin(54)})$) node(vn-2k+3)[inner sep=0.3ex,circle,fill=black]{};
\node [right] at ($({cos(54)},{-sin(54)})$) {$v_{n-2k+3}$};
\draw ($({cos(78)},{-sin(78)})$) node(vn-2k+2)[inner sep=0.3ex,circle,fill=black]{};
\node [right] at ($({cos(78)},{-sin(78)})$) {$v_{n-2k+2}$};
\draw ($({-cos(78)},{-sin(78)})$) node(vn-2k+1)[inner sep=0.3ex,circle,fill=black]{};
\node [left] at ($({-cos(78)},{-sin(78)})$) {$v_{n-2k+1}$};
\draw (0,0) node(v1)[inner sep=0.3ex,circle,fill=black]{};
\node [right] at (0.1,0) {$v_1$};
\node [left] at (0.75,0.1) {$\vdots$};
\node [left] at (-0.75,0.1) {$\vdots$};

\draw (v2) edge node[above,sloped,text=red,font=\tiny]{} (v1);
\draw (v1) edge node[below,sloped,text=red,font=\tiny]{} (v3);
\draw (v1) edge node[above,sloped,text=red,font=\tiny]{} (vn-2k-1);
\draw (v1) edge node[below,sloped,text=red,font=\tiny]{} (vn-2k);
\draw (vn-2k+1) edge node[above,sloped,text=red,font=\tiny]{} (v1);
\draw (v1) edge node[above,sloped,text=red,font=\tiny]{} (vn-2k+2);
\draw (v1) edge node[below,sloped,text=red,font=\tiny]{} (vn-2k+3);
\draw (v1) edge node[above,sloped,text=red,font=\tiny]{} (vn-2k+4);
\draw (v1) edge node[below,sloped,text=red,font=\tiny]{} (vn-1);
\draw (v1) edge node[above,sloped,text=red,font=\tiny]{} (vn);
\draw (v2) edge node[left=-0.1,anchor=south east,text=red,font=\tiny]{$d_3-1$} (v3);
\draw (vn-1) edge node[right=-0.1,anchor=south west,text=red,font=\tiny]{$1$} (vn);
\draw (vn-2k+3) edge node[right=0.1,text=red,font=\tiny]{$1$} (vn-2k+4);
\draw (vn-2k+1) edge node[below=0.1,text=red,font=\tiny]{$d_{n-2k+2}-1-\delta$} (vn-2k+2);
\draw (vn-2k-1) edge node[left=0.1, text=red,font=\tiny]{$d_{n-2k}-1$} (vn-2k);
\end{tikzpicture}
\caption{$n$ odd}
\label{fig:bigD1}
\end{subfigure}
\begin{subfigure}{\textwidth}
\centering
\begin{tikzpicture}[scale=2.5]

\draw ($({-cos(66)},{sin(66)})$) node(vn)[inner sep=0.3ex,circle,fill=black]{};
\node [left] at ($({-cos(66)},{sin(66)})$) {$v_n$};
\draw ($(0,1)$) node(vn-1)[inner sep=0.3ex,circle,fill=black]{};
\node [above] at ($(0,1)$) {$v_{n-1}$};
\draw ($({cos(66)},{sin(66)})$) node(vn-2)[inner sep=0.3ex,circle,fill=black]{};
\node [right] at ($({cos(66)},{sin(66)})$) {$v_{n-2}$};
\draw ($({cos(18)},{sin(18)})$) node(vn-3)[inner sep=0.3ex,circle,fill=black]{};
\node [right] at ($({cos(18)},{sin(18)})$) {$v_{n-3}$};
\draw ($({cos(6)},{-sin(6)})$) node(vn-4)[inner sep=0.3ex,circle,fill=black]{};
\node [right] at ($({cos(6)},{-sin(6)})$) {$v_{n-4}$};
\draw ($({-cos(18)},{sin(18)})$) node(v2)[inner sep=0.3ex,circle,fill=black]{};
\node [left] at ($({-cos(18)},{sin(18)})$) {$v_2$};
\draw ($({-cos(6)},{-sin(6)})$) node(v3)[inner sep=0.3ex,circle,fill=black]{};
\node [left] at ($({-cos(6)},{-sin(6)})$) {$v_3$};

\node at (0,-0.4) {$\cdots$};
\node at ($({0.4*-cos(48)},{-0.4*sin(48)})$) {$\ddots$};
\node at ($({0.4*cos(48)},{-0.4*sin(48)})$) {$\iddots$};
\draw (0,0) node(v1)[inner sep=0.3ex,circle,fill=black]{};
\node [below] at (0,0) {$v_1$};


\draw (v1) edge node[below,sloped,text=red,font=\tiny]{} (vn);
\draw (v1) edge node[below,sloped,text=red,font=\tiny]{} (vn-1);
\draw (v1) edge node[below,sloped,text=red,font=\tiny]{} (vn-2);
\draw (v1) edge node[above,sloped,text=red,font=\tiny]{} (vn-3);
\draw (v1) edge node[above,sloped,text=red,font=\tiny]{} (vn-4);
\draw (v1) edge node[above,sloped,text=red,font=\tiny]{} (v2);
\draw (v1) edge node[above,sloped,text=red,font=\tiny]{} (v3);
\draw (vn) edge node[above left=-0.1,anchor=south east,text=red,font=\tiny]{$d_n-2-\alpha$} (vn-1);
\draw (vn-1) edge node[above right=-0.1,anchor=south west,text=red,font=\tiny]{$d_{n-1}-d_n+1-\beta$} (vn-2);
\draw (vn-4) edge node[right,text=red,font=\tiny]{$d_{n-3}-1$} (vn-3);
\draw (v2) edge node[left,text=red,font=\tiny]{$d_3-1$} (v3);
\end{tikzpicture}
\caption{$n$ even and $k=1$}
\label{fig:bigD2}
\end{subfigure}
\begin{subfigure}{\textwidth}
\centering
\begin{tikzpicture}[scale=2.5]


\draw ($({-cos(66)},{sin(66)})$) node(vn)[inner sep=0.3ex,circle,fill=black]{};
\node [above left] at ($({-cos(66)},{sin(66)})$) {$v_n$};
\draw ($(0,1)$) node(vn-1)[inner sep=0.3ex,circle,fill=black]{};
\node [above] at ($(0,1)$) {$v_{n-1}$};
\draw ($({cos(66)},{sin(66)})$) node(vn-2)[inner sep=0.3ex,circle,fill=black]{};
\node [above right] at ($({cos(66)},{sin(66)})$) {$v_{n-2}$};
\draw ($({cos(42)},{sin(42)})$) node(vn-3)[inner sep=0.3ex,circle,fill=black]{};
\node [right] at ($({cos(42)},{sin(42)})$) {$v_{n-3}$};
\draw ($({cos(18)},{sin(18)})$) node(vn-4)[inner sep=0.3ex,circle,fill=black]{};
\node [right] at ($({cos(18)},{sin(18)})$) {$v_{n-4}$};
\draw ($({-cos(42)},{sin(42)})$) node(v2)[inner sep=0.3ex,circle,fill=black]{};
\node [left] at ($({-cos(42)},{sin(42)})$) {$v_2$};
\draw ($({-cos(18)},{sin(18)})$) node(v3)[inner sep=0.3ex,circle,fill=black]{};
\node [left] at ($({-cos(18)},{sin(18)})$) {$v_3$};
\draw ($({-cos(30)},{-sin(30)})$) node(vn-2k-2)[inner sep=0.3ex,circle,fill=black]{};
\node [left] at ($({-cos(30)},{-sin(30)})$) {$v_{n-2k-2}$};
\draw ($({-cos(54)},{-sin(54)})$) node(vn-2k-1)[inner sep=0.3ex,circle,fill=black]{};
\node [left] at ($({-cos(54)},{-sin(54)})$) {$v_{n-2k-1}$};
\draw ($({cos(30)},{-sin(30)})$) node(vn-2k+3)[inner sep=0.3ex,circle,fill=black]{};
\node [right] at ($({cos(30)},{-sin(30)})$) {$v_{n-2k+3}$};
\draw ($({cos(54)},{-sin(54)})$) node(vn-2k+2)[inner sep=0.3ex,circle,fill=black]{};
\node [right] at ($({cos(54)},{-sin(54)})$) {$v_{n-2k+2}$};
\draw ($({cos(78)},{-sin(78)})$) node(vn-2k+1)[inner sep=0.3ex,circle,fill=black]{};
\node [right] at ($({cos(78)},{-sin(78)})$) {$v_{n-2k+1}$};
\draw ($({-cos(78)},{-sin(78)})$) node(vn-2k)[inner sep=0.3ex,circle,fill=black]{};
\node [left] at ($({-cos(78)},{-sin(78)})$) {$v_{n-2k}$};
\draw (0,0) node(v1)[inner sep=0.3ex,circle,fill=black]{};
\node [right] at (0.1,0) {$v_1$};

\node at (0.75,0) {$\vdots$};
\node at (-0.75,0) {$\vdots$};


\draw (v1) edge node[above,sloped,text=red,font=\tiny]{} (v2);
\draw (v1) edge node[above,sloped,text=red,font=\tiny]{} (v3);
\draw (v1) edge node[above,sloped,text=red,font=\tiny]{} (vn-2k-2);
\draw (v1) edge node[above,sloped,text=red,font=\tiny]{} (vn-2k-1);
\draw (v1) edge node[above,sloped,text=red,font=\tiny]{} (vn-2k);
\draw (v1) edge node[above,sloped,text=red,font=\tiny]{} (vn-2k+1);
\draw (v1) edge node[above,sloped,text=red,font=\tiny]{} (vn-2k+2);
\draw (v1) edge node[above,sloped,text=red,font=\tiny]{} (vn-2k+3);
\draw (v1) edge node[above,sloped,text=red,font=\tiny]{} (vn-4);
\draw (v1) edge node[above,sloped,text=red,font=\tiny]{} (vn-3);
\draw (v1) edge node[above,sloped,text=red,font=\tiny]{} (vn-2);
\draw (v1) edge node[above,sloped,text=red,font=\tiny]{} (vn-1);
\draw (v1) edge node[below,sloped,text=red,font=\tiny]{} (vn);
\draw (vn-1) edge node[above left,text=red,font=\tiny]{$1$} (vn);
\draw (vn-1) edge node[above right,text=red,font=\tiny]{$1$} (vn-2);
\draw (vn-3) edge node[right,text=red,font=\tiny]{$1$} (vn-4);
\draw (vn-2k+3) edge node[right=0.1,text=red,font=\tiny]{$1$} (vn-2k+2);
\draw (vn-2k+1) edge node[below=0.1,text=red,font=\tiny]{$d_{n-2k+1}-1-\delta$} (vn-2k);
\draw (vn-2k-1) edge node[left=0.1,text=red,font=\tiny]{$d_{n-2k-1}-1$} (vn-2k-2);
\draw (v2) edge node[left,text=red,font=\tiny]{$d_3-1$} (v3);
\end{tikzpicture}
\caption{$n$ even and $k>1$}
\label{fig:bigD3}
\end{subfigure}
\caption{The fan-shaped constructions in Lemma \ref{bigD}. For simplicity only multiplicities of edges not containing $v_1$ are labelled. Multiplicities of edges containing $v_1$ are included in the proof and can be deduced using $\deg(v_i)=d_i$ for all $i\in[n]$.}
\label{fig:bigD}
\end{figure}
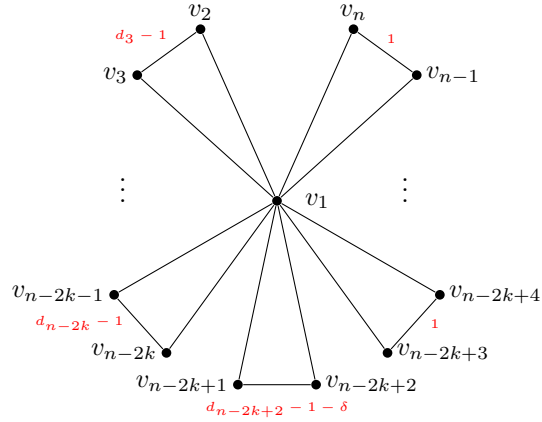
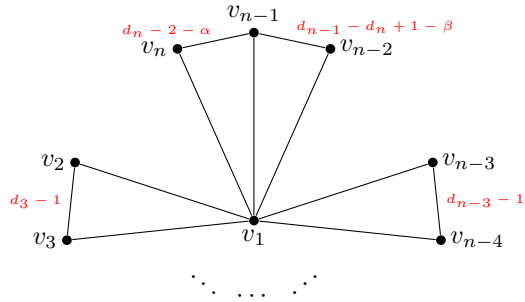
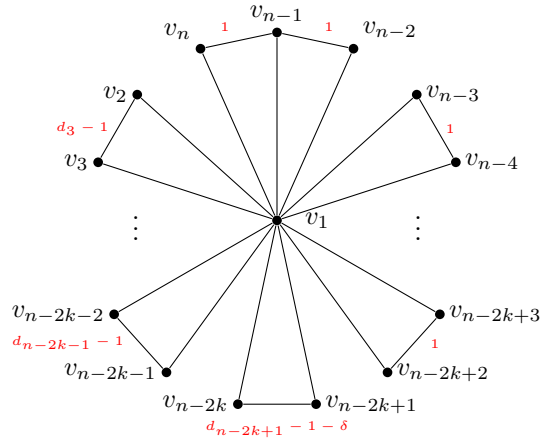
\begin{proof}
We separate into two cases depending on the parity of $n$. 

If $n$ is odd, then $D\geq n-1$ as $D$ is even. Let $\overline{d}_{n+2-2i}=d_{n+2-2i}-2$ for each $1\leq i\leq\frac{n-1}2$. Using $d_1\leq\sum_{i=2}^n(d_i-1)$, we have
\begin{align*}
\frac12\left(D-(n-1)\right)&=\frac12\left(\sum_{i=1}^n(-1)^{i-1}d_i-(n-1)\right)\\
&\leq\frac12\left(\sum_{i=2}^nd_i+\sum_{i=2}^n(-1)^{i-1}d_i-2(n-1)\right)\\
&=\sum_{i=1}^{\frac{n-1}2}d_{n+2-2i}-(n-1)=\sum_{i=1}^{\frac{n-1}2}\overline{d}_{n+2-2i}.
\end{align*}
Hence, there exists an index $1\leq k\leq\frac{n-1}2$ such that $\sum_{i=1}^{k-1}\overline{d}_{n+2-2i}\leq\frac12(D-(n-1))\leq\sum_{i=1}^k\overline{d}_{n+2-2i}$. Let $\delta=\frac12(D-(n-1))-\sum_{i=1}^{k-1}\overline{d}_{n+2-2i}$, so $0\leq\delta\leq\overline{d}_{n+2-2k}=d_{n+2-2k}-2$. Consider the multigraph $G$ on $n$ vertices $v_1,\cdots,v_n$ whose edge multiplicities are given as follows (see also Figure \ref{fig:bigD1}). For each $i\in[k-1]$, let $m(v_1,v_{n-2i+2})=d_{n-2i+2}-1$, $m(v_1,v_{n-2i+1})=d_{n-2i+1}-1$, and $m(v_{n-2i+1},v_{n-2i+2})=1$. For each $k+1\leq i\leq\frac{n-1}2$, let $m(v_1,v_{n-2i+2})=1$, $m(v_1,v_{n-2i+1})=1+d_{n-2i+1}-d_{n-2i+2}$, and $m(v_{n-2i+1},v_{n-2i+2})=d_{n-2i+2}-1$. Finally, let $m(v_1,v_{n-2k+2})=1+\delta$, $m(v_1,v_{n-2k+1})=1+\delta+d_{n-2k+1}-d_{n-2k+2}$, and $m(v_{n-2k+1},v_{n-2k+2})=d_{n-2k+2}-1-\delta$. Note that every edge mentioned so far has multiplicity at least 1. Let every other edges in $G$ have multiplicity 0. It follows that $\deg(v_i)=d_i$ for all $2\leq i\leq n$ and
\begin{align*}
\deg(v_1)&=\sum_{i=2}^nm(v_1,v_i)\\
&=\sum_{i=1}^{k-1}(d_{n-2i+1}+d_{n-2i+2}-2)+\sum_{i=k}^{\frac{n-1}2}(d_{n-2i+1}-d_{n-2i+2}+2)+2\delta\\
&=d_1-D+2\sum_{i=1}^{k-1}\overline{d}_{n-2i+2}+(n-1)+2\delta=d_1,
\end{align*}
where the last equality follows from the definition of $\delta$. Hence, $G$ is a multigraph with degree sequence $(d_1,\cdots,d_n)$. Moreover, as an edge in $G$ has positive multiplicity if and only if it is of the form $v_1v_i$ for $2\leq i\leq n$, or of the form $v_{2i}v_{2i+1}$ for some $1\leq i\leq \frac{n-1}2$, we see that $G$ is triangular, completing the proof of the odd case.

If $n$ is even, then $d_1-\sum_{i=2}^nd_i$ is also even, and thus $d_1\leq\sum_{i=2}^nd_i-n$. Let $\overline{d}_{n-1}=d_{n-1}-3$, and for each $2\leq i\leq\frac{n-2}2$, let $\overline{d}_{n+1-2i}=d_{n+1-2i}-2$. It follows that
\begin{align*}
\frac12\left(D-(n-2)\right)&=\frac12\left(\sum_{i=1}^n(-1)^{i-1}d_i-(n-2)\right)\\
&\leq\frac12\left(\sum_{i=2}^nd_i+\sum_{i=2}^n(-1)^{i-1}d_i-(2n-2)\right)\\
&=\sum_{i=1}^{\frac{n-2}2}d_{n+1-2i}-(n-1)=\sum_{i=1}^{\frac{n-2}2}\overline{d}_{n+1-2i}.
\end{align*}
Hence, there exists an index $1\leq k\leq \frac{n-2}2$ such that $\sum_{i=1}^{k-1}\overline{d}_{n+1-2i}\leq\frac12(D-(n-2))\leq\sum_{i=1}^k\overline{d}_{n+1-2i}$. Let $\delta=\frac12(D-(n-2))-\sum_{i=1}^{k-1}\overline{d}_{n+1-2i}$, so $0\leq\delta\leq\overline{d}_{n+1-2k}$.

If $k=1$, let $\alpha,\beta$ be any non-negative integers satisfying $\alpha\leq d_n-3$, $\beta\leq d_{n-1}-d_n$ and $\alpha+\beta=\delta$. Such $\alpha,\beta$ exists as $d_n-3+d_{n-1}-d_n=\overline{d}_{n-1}\geq\delta$. Consider the multigraph $G$ on $n$ vertices $v_1,\cdots,v_n$ whose edge multiplicities are given as follows (see also Figure \ref{fig:bigD2}). Let $m(v_1,v_n)=2+\alpha$, $m(v_1,v_{n-1})=1+\alpha+\beta$, $m(v_1,v_{n-2})=d_{n-2}-d_{n-1}+d_n-1+\beta$, $m(v_n,v_{n-1})=d_n-2-\alpha$ and $m(v_{n-1},v_{n-2})=d_{n-1}-d_n+1-\beta$. For each $2\leq i\leq \frac{n-2}2$, let $m(v_1,v_{n-2i+1})=1$, $m(v_1,v_{n-2i})=1+d_{n-2i}-d_{n-2i+1}$, and $m(v_{n-2i},v_{n-2i+1})=d_{n-2i+1}-1$. Note that every edge mentioned so far has multiplicity at least 1. Let every other edges in $G$ have multiplicity 0. Then, $\deg(v_i)=d_i$ for all $2\leq i\leq n$ and
\begin{align*}
\deg(v_1)&=2+2\alpha+2\beta+d_{n-2}-d_{n-1}+d_n+\sum_{i=2}^{\frac{n-2}2}(2+d_{n-2i}-d_{n-2i+1})\\
&=2+2\delta+d_1-D+(n-4)=d_1.
\end{align*}

If $k>1$, consider the multigraph $G$ on $n$ vertices $v_1,\cdots,v_n$ whose edge multiplicities are given as follows (see also Figure \ref{fig:bigD3}). Let $m(v_1,v_n)=d_n-1$, $m(v_1,v_{n-1})=d_{n-1}-2$, $m(v_1,v_{n-2})=d_{n-2}-1$, and $m(v_n,v_{n-1})=m(v_{n-1},v_{n-2})=1$. For each $2\leq i\leq k-1$, $m(v_1,v_{n-2i+1})=d_{n-2i+1}-1$, $m(v_1,v_{n-2i})=d_{n-2i}-1$, and $m(v_{n-2i},v_{n-2i+1})=1$. For each $k+1\leq i\leq\frac{n-2}2$, $m(v_1,v_{n-2i+1})=1$, $m(v_1,v_{n-2i})=1+d_{n-2i}-d_{n-2i+1}$, and $m(v_{n-2i},v_{n-2i+1})=d_{n-2i+1}-1$. Finally, let $m(v_1,v_{n-2k+1})=1+\delta$, $m(v_1,v_{n-2k})=1+\delta+d_{n-2k}-d_{n-2k+1}$, and $m(v_{n-2k},v_{n-2k+1})=d_{n-2k+1}-1-\delta$. Note that from assumptions, every edge mentioned so far has multiplicity at least 1. Let every other edges in $G$ have multiplicity 0. Then $\deg(v_i)=d_i$ for all $2\leq i\leq n$ and
\begin{align*}
\deg(v_1)&=d_n+d_{n-1}+d_{n-2}-4+\sum_{i=2}^{k-1}(d_{n-2i+1}+d_{n-2i}-2)+\sum_{i=k}^{\frac{n-2}2}(d_{n-2i}-d_{n-2i+1}+2)+2\delta\\
&=d_1-D+2\sum_{i=1}^{k-1}\overline{d}_{n-2i+1}+(n-2)+2\delta=d_1.
\end{align*}
Therefore, regardless of whether or not $k=1$, $G$ is a multigraph with degree sequence $(d_1,\cdots,d_n)$. Moreover, we have that an edge in $G$ has positive multiplicity if and only if it is of the form $v_1v_i$ for $2\leq i\leq n$, or of the form $v_{2i}v_{2i+1}$ for some $1\leq i\leq \frac{n-2}2$, or it is $v_{n-1}v_n$. Hence, $G$ is triangular, proving the even case.
\end{proof}

\begin{lemma}\label{smallD}
Let $n\geq 5$ and let $(d_1,\cdots,d_n)$ be a sequence of non-negative integers satisfying
\begin{itemize}
\item $d_2\geq\cdots\geq d_n\geq4$,
\item $D=\sum_{i=1}^n(-1)^{i-1}d_i$ is equal to $4$ if $n$ is odd, and one of $0,2,4$ if $n$ is even,
\end{itemize}
then there exists a triangular multigraph $G$ with degree sequence $(d_1,\cdots,d_n)$.    
\end{lemma}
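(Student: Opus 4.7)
The plan is to construct the desired multigraph on top of $C_n^2$, the square of the length-$n$ cycle on $v_1,\dots,v_n$ (indices mod $n$), in which $v_i$ is joined to each of $v_{i\pm 1}$ and $v_{i\pm 2}$. This is a 4-regular triangular multigraph, since every edge lies in the triangle $v_iv_{i+1}v_{i+2}$, so any multigraph obtained by increasing the multiplicities of $C_n^2$'s edges---or by a small structural modification that leaves every edge with a common neighbour---remains triangular.

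As a preliminary I would check that $d_1\ge 4$: the monotonicity $d_2\ge\cdots\ge d_n\ge 4$ gives $D\le d_1$ when $n$ is odd and $D\le d_1-d_n\le d_1-4$ when $n$ is even, so the allowed ranges for $D$ force $d_1\ge 4$. Writing $e_i=d_i-4\ge 0$, I introduce non-negative integer ``excess'' multiplicities $a_i$ on the short edge $v_iv_{i+1}$ and $b_i$ on the long edge $v_iv_{i+2}$ (in addition to the $C_n^2$ baseline of $1$). The degree constraints reduce to the cyclic system
\[
a_{i-1}+a_i+b_{i-2}+b_i=e_i\qquad(i\in[n]).
\]

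For $n$ odd (where $D=4$) and for $n$ even with $D=0$, I would take every $b_i=0$ and solve the reduced chain $a_{i-1}+a_i=e_i$. The only solvability obstruction equals $D-4$ or $D$ respectively, and so vanishes; fixing $a_n=0$ ($n$ odd) or $a_0=0$ ($n$ even) and running the recurrence backward, each $a_{n-k}$ becomes a signed tail sum $\sum_{j=1}^k(-1)^{j-1}e_{n-k+j}$, which telescopes into a sum of non-negative differences $e_m-e_{m+1}$ (together with a possible residual term $e_n$) by the monotonicity $e_2\ge\cdots\ge e_n$. For $n$ even with $D\in\{2,4\}$ this fails because the obstruction is now nonzero; whenever some odd $j$ satisfies $e_j,e_{j+2}\ge D/2$, I would set $b_j=D/2$ (and all other $b_i=0$) and reduce to the previous case with modified right-hand sides $e_i-(D/2)$ at $i=j,j+2$.

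The main obstacle is the extremal sub-case with $n$ even, $D\in\{2,4\}$ and $e_i=0$ for every $i\ge 3$---equivalently, $d=(d_2+D,d_2,4,4,\dots,4)$ with $d_2\ge 4$. Here no valid placement of any $b_j$ is available and the multiplicity-only approach on $C_n^2$ breaks down. I would handle this bounded residual family by a direct structural modification of the base graph: raise the multiplicity of $v_1v_2$ by $d_2-4$ to absorb the bulk of the excess, add one or two carefully chosen new edges of the form $v_1v_k$ (each of which automatically lies in a triangle via a common neighbour already present in $C_n^2$), delete a few chord edges at the affected vertices to rebalance degrees, and adjust multiplicities on nearby short and chord edges to restore the correct degrees while preserving every edge's triangle membership. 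Verifying triangularity and correctness of the degree sequence for the remaining cases, distinguished by the parity of $n$ and the value of $D$, will constitute the technical heart of the proof.
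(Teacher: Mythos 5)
Your construction for $n$ odd and for $n$ even with $D=0$ is exactly the paper's: put excess multiplicity equal to the alternating tail sum $\sum_{j=i+1}^{n}(-1)^{j-i-1}(d_j-4)$ on the short edge $v_iv_{i+1}$ of $C_n^2$, anchored by zero excess on $v_nv_1$; these tail sums are non-negative by the monotonicity of $d_2\geq\cdots\geq d_n$. The gap is the case $n$ even, $D\in\{2,4\}$, and it is twofold. First, your ``generic'' fix of placing $b_j=D/2$ on a chord $v_jv_{j+2}$ with $j$ odd and $e_j,e_{j+2}\geq D/2$ is not verified and can fail: after subtracting $D/2$ from $e_j$ and $e_{j+2}$ the sequence is no longer monotone, so the resulting $a_i$ (alternating tail sums of the modified sequence) need not be non-negative. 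For instance with $n=8$, $(d_1,\cdots,d_8)=(6,5,5,5,5,4,4,4)$, so $D=2$, the choice $j=3$ satisfies your criterion ($e_3=e_5=1$) yet forces $a_2=-1$. Second, and more importantly, you explicitly leave the residual family with $e_i=0$ for $i\geq3$ unresolved: ``add one or two carefully chosen new edges \dots delete a few chord edges \dots adjust multiplicities'' is a plan rather than a construction, and you yourself call it the technical heart of the proof.

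The missing idea is that no case analysis on the $e_i$ is needed at all, because the extra $D$ units of degree are all owed to $v_1$ and the base graph always contains the chords $v_2v_n$ and $v_2v_4$ with multiplicity exactly $1$. For $D=2$: increase $m(v_1,v_2)$ and $m(v_1,v_n)$ by $1$ and delete the chord $v_2v_n$. For $D=4$: increase $m(v_1,v_2)$ by $2$ and $m(v_1,v_n)$ by $1$, add the edge $v_1v_4$, and delete the chords $v_2v_n$ and $v_2v_4$. Every vertex other than $v_1$ has its degree unchanged, $\deg(v_1)$ increases by exactly $D$, and every affected edge still lies in a triangle (namely $v_1v_2v_3$, $v_1v_{n-1}v_n$, $v_1v_3v_4$ and $v_3v_4v_5$). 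This single uniform modification replaces both halves of your argument for $D\in\{2,4\}$; you would need either to supply something like it, or to prove that a suitable chord $v_jv_{j+2}$ with non-negative resulting $a_i$ always exists outside your residual family and then settle that family explicitly.
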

\begin{figure}
\centering
\begin{subfigure}{\textwidth}
\centering
\begin{tikzpicture}[scale=1.5]
\draw ($(0,1)$) -- ($({cos(54)},{sin(54)})$) -- ($({cos(18)},{sin(18)})$) -- ($({cos(18)},-{sin(18)})$) -- ($({cos(54)},-{sin(54)})$);
\draw ($(0,1)$) -- ($(-{cos(54)},{sin(54)})$) -- ($(-{cos(18)},{sin(18)})$) -- ($(-{cos(18)},-{sin(18)})$) -- ($(-{cos(54)},-{sin(54)})$);
\draw ($(-{cos(54)},-{sin(54)})$) -- ($(-{cos(18)},{sin(18)})$) -- (0,1) -- ($({cos(18)},{sin(18)})$) -- ($({cos(54)},-{sin(54)})$);
\draw ($(-{cos(18)},-{sin(18)})$) -- ($(-{cos(54)},{sin(54)})$) -- ($({cos(54)},{sin(54)})$) -- ($({cos(18)},-{sin(18)})$);
\draw ($(0,1)$) node[inner sep=0.3ex,circle,fill=black]{};
\node [above] at ($(0,1)$) {$v_1$};
\draw ($({cos(54)},{sin(54)})$) node[inner sep=0.3ex,circle,fill=black]{};
\node [above right] at ($({cos(54)},{sin(54)})$) {$v_n$};
\draw ($({cos(18)},{sin(18)})$) node[inner sep=0.3ex,circle,fill=black]{};
\node [right] at ($({cos(18)},{sin(18)})$) {$v_{n-1}$};
\draw ($({cos(18)},-{sin(18)})$) node[inner sep=0.3ex,circle,fill=black]{};
\node [right] at ($({cos(18)},-{sin(18)})$) {$v_{n-2}$};
\draw ($({cos(54)},-{sin(54)})$) node[inner sep=0.3ex,circle,fill=black]{};
\node [below right] at ($({cos(54)},-{sin(54)})$) {$v_{n-3}$};
\draw ($(-{cos(54)},-{sin(54)})$) node[inner sep=0.3ex,circle,fill=black]{};
\node [below left] at ($(-{cos(54)},-{sin(54)})$) {$v_5$};
\draw ($(-{cos(18)},-{sin(18)})$) node[inner sep=0.3ex,circle,fill=black]{};
\node [left] at ($(-{cos(18)},-{sin(18)})$) {$v_4$};
\draw ($(-{cos(18)},{sin(18)})$) node[inner sep=0.3ex,circle,fill=black]{};
\node [left] at ($(-{cos(18)},{sin(18)})$) {$v_3$};
\draw ($(-{cos(54)},{sin(54)})$) node[inner sep=0.3ex,circle,fill=black]{};
\node [above left] at ($(-{cos(54)},{sin(54)})$) {$v_2$};
\draw ($(0,-2/3)$) node {$\cdots$};

\node [anchor=west, red, font=\tiny] at ($({0.5*cos(54)+0.5*cos(18)},{0.55*sin(54)+0.55*sin(18)})$) {$1+D_n$};
\node [red, right, font=\tiny] at ($({cos(18)},0)$) {$1+D_{n-1}$};
\node [anchor=west, red, font=\tiny] at ($({0.5*cos(54)+0.5*cos(18)},{-0.55*sin(54)-0.55*sin(18)})$) {$1+D_{n-2}$};
\node [anchor=east, red, font=\tiny] at ($({-0.5*cos(54)-0.5*cos(18)},{-0.55*sin(54)-0.55*sin(18)})$) {$1+D_5$};
\node [red, left, font=\tiny] at ($({-cos(18)},0)$) {$1+D_4$};
\node [anchor=east, red, font=\tiny] at ($({-0.5*cos(54)-0.5*cos(18)},{0.55*sin(54)+0.55*sin(18)})$) {$1+D_3$};
\node [anchor=east, red, font=\tiny] at ($({-0.15*cos(54)},{0.6*sin(54)+0.6})$) {$1+D_2$};
\end{tikzpicture}
\caption{$n$ odd, or $n$ even and $D=0$}
\label{fig:smallD1}
\end{subfigure}
\begin{subfigure}{0.4\textwidth}
\centering
\begin{tikzpicture}[scale=1.5]
\draw ($(0,1)$) -- ($({cos(54)},{sin(54)})$) -- ($({cos(18)},{sin(18)})$) -- ($({cos(18)},-{sin(18)})$) -- ($({cos(54)},-{sin(54)})$);
\draw ($(0,1)$) -- ($(-{cos(54)},{sin(54)})$) -- ($(-{cos(18)},{sin(18)})$) -- ($(-{cos(18)},-{sin(18)})$) -- ($(-{cos(54)},-{sin(54)})$);
\draw ($(-{cos(54)},-{sin(54)})$) -- ($(-{cos(18)},{sin(18)})$) -- (0,1) -- ($({cos(18)},{sin(18)})$) -- ($({cos(54)},-{sin(54)})$);
\draw ($(-{cos(18)},-{sin(18)})$) -- ($(-{cos(54)},{sin(54)})$);
\draw ($({cos(54)},{sin(54)})$) -- ($({cos(18)},-{sin(18)})$);
\draw ($(0,1)$) node[inner sep=0.3ex,circle,fill=black]{};
\node [above] at ($(0,1)$) {$v_1$};
\draw ($({cos(54)},{sin(54)})$) node[inner sep=0.3ex,circle,fill=black]{};
\node [above right] at ($({cos(54)},{sin(54)})$) {$v_n$};
\draw ($({cos(18)},{sin(18)})$) node[inner sep=0.3ex,circle,fill=black]{};
\node [right] at ($({cos(18)},{sin(18)})$) {$v_{n-1}$};
\draw ($({cos(18)},-{sin(18)})$) node[inner sep=0.3ex,circle,fill=black]{};
\node [right] at ($({cos(18)},-{sin(18)})$) {$v_{n-2}$};
\draw ($({cos(54)},-{sin(54)})$) node[inner sep=0.3ex,circle,fill=black]{};
\node [below right] at ($({cos(54)},-{sin(54)})$) {$v_{n-3}$};
\draw ($(-{cos(54)},-{sin(54)})$) node[inner sep=0.3ex,circle,fill=black]{};
\node [below left] at ($(-{cos(54)},-{sin(54)})$) {$v_5$};
\draw ($(-{cos(18)},-{sin(18)})$) node[inner sep=0.3ex,circle,fill=black]{};
\node [left] at ($(-{cos(18)},-{sin(18)})$) {$v_4$};
\draw ($(-{cos(18)},{sin(18)})$) node[inner sep=0.3ex,circle,fill=black]{};
\node [left] at ($(-{cos(18)},{sin(18)})$) {$v_3$};
\draw ($(-{cos(54)},{sin(54)})$) node[inner sep=0.3ex,circle,fill=black]{};
\node [above left] at ($(-{cos(54)},{sin(54)})$) {$v_2$};
\draw ($(0,-2/3)$) node {$\cdots$};

\node [anchor=west, red, font=\tiny] at ($({0.5*cos(54)+0.5*cos(18)},{0.55*sin(54)+0.55*sin(18)})$) {$1+D_n$};
\node [red, right, font=\tiny] at ($({cos(18)},0)$) {$1+D_{n-1}$};
\node [anchor=west, red, font=\tiny] at ($({0.5*cos(54)+0.5*cos(18)},{-0.55*sin(54)-0.55*sin(18)})$) {$1+D_{n-2}$};
\node [anchor=east, red, font=\tiny] at ($({-0.5*cos(54)-0.5*cos(18)},{-0.55*sin(54)-0.55*sin(18)})$) {$1+D_5$};
\node [red, left, font=\tiny] at ($({-cos(18)},0)$) {$1+D_4$};
\node [anchor=east, red, font=\tiny] at ($({-0.5*cos(54)-0.5*cos(18)},{0.55*sin(54)+0.55*sin(18)})$) {$1+D_3$};
\node [anchor=east, red, font=\tiny] at ($({-0.15*cos(54)},{0.6*sin(54)+0.6})$) {$2+D_2$};
\node [red, font=\tiny] at ($({0.6*cos(54)},{0.6*sin(54)+0.6})$) {$2$};
\end{tikzpicture}
\caption{$n$ even and $D=2$}
\label{fig:smallD2}
\end{subfigure}
\begin{subfigure}{0.4\textwidth}
\centering
\begin{tikzpicture}[scale=1.5]
\draw ($(0,1)$) -- ($({cos(54)},{sin(54)})$) -- ($({cos(18)},{sin(18)})$) -- ($({cos(18)},-{sin(18)})$) -- ($({cos(54)},-{sin(54)})$);
\draw ($(0,1)$) -- ($(-{cos(54)},{sin(54)})$) -- ($(-{cos(18)},{sin(18)})$) -- ($(-{cos(18)},-{sin(18)})$) -- ($(-{cos(54)},-{sin(54)})$);
\draw ($(-{cos(54)},-{sin(54)})$) -- ($(-{cos(18)},{sin(18)})$) -- (0,1) -- ($({cos(18)},{sin(18)})$) -- ($({cos(54)},-{sin(54)})$);
\draw ($({cos(54)},{sin(54)})$) -- ($({cos(18)},-{sin(18)})$);
\draw (0,1) -- ($(-{cos(18)},-{sin(18)})$);
\draw ($(0,1)$) node[inner sep=0.3ex,circle,fill=black]{};
\node [above] at ($(0,1)$) {$v_1$};
\draw ($({cos(54)},{sin(54)})$) node[inner sep=0.3ex,circle,fill=black]{};
\node [above right] at ($({cos(54)},{sin(54)})$) {$v_n$};
\draw ($({cos(18)},{sin(18)})$) node[inner sep=0.3ex,circle,fill=black]{};
\node [right] at ($({cos(18)},{sin(18)})$) {$v_{n-1}$};
\draw ($({cos(18)},-{sin(18)})$) node[inner sep=0.3ex,circle,fill=black]{};
\node [right] at ($({cos(18)},-{sin(18)})$) {$v_{n-2}$};
\draw ($({cos(54)},-{sin(54)})$) node[inner sep=0.3ex,circle,fill=black]{};
\node [below right] at ($({cos(54)},-{sin(54)})$) {$v_{n-3}$};
\draw ($(-{cos(54)},-{sin(54)})$) node[inner sep=0.3ex,circle,fill=black]{};
\node [below left] at ($(-{cos(54)},-{sin(54)})$) {$v_5$};
\draw ($(-{cos(18)},-{sin(18)})$) node[inner sep=0.3ex,circle,fill=black]{};
\node [left] at ($(-{cos(18)},-{sin(18)})$) {$v_4$};
\draw ($(-{cos(18)},{sin(18)})$) node[inner sep=0.3ex,circle,fill=black]{};
\node [left] at ($(-{cos(18)},{sin(18)})$) {$v_3$};
\draw ($(-{cos(54)},{sin(54)})$) node[inner sep=0.3ex,circle,fill=black]{};
\node [above left] at ($(-{cos(54)},{sin(54)})$) {$v_2$};
\draw ($(0,-2/3)$) node {$\cdots$};

\node [anchor=west, red, font=\tiny] at ($({0.5*cos(54)+0.5*cos(18)},{0.55*sin(54)+0.55*sin(18)})$) {$1+D_n$};
\node [red, right, font=\tiny] at ($({cos(18)},0)$) {$1+D_{n-1}$};
\node [anchor=west, red, font=\tiny] at ($({0.5*cos(54)+0.5*cos(18)},{-0.55*sin(54)-0.55*sin(18)})$) {$1+D_{n-2}$};
\node [anchor=east, red, font=\tiny] at ($({-0.5*cos(54)-0.5*cos(18)},{-0.55*sin(54)-0.55*sin(18)})$) {$1+D_5$};
\node [red, left, font=\tiny] at ($({-cos(18)},0)$) {$1+D_4$};
\node [anchor=east, red, font=\tiny] at ($({-0.5*cos(54)-0.5*cos(18)},{0.55*sin(54)+0.55*sin(18)})$) {$1+D_3$};
\node [anchor=east, red, font=\tiny] at ($({-0.15*cos(54)},{0.6*sin(54)+0.6})$) {$3+D_2$};
\node [red, font=\tiny] at ($({0.6*cos(54)},{0.6*sin(54)+0.6})$) {$2$};
\end{tikzpicture}
\caption{$n$ even and $D=4$}
\label{fig:smallD3}
\end{subfigure}
\caption{The constructions in Lemma \ref{smallD}. Every edge here with no labelled multiplicity has multiplicity 1.}
\label{fig:smallD}
\end{figure}
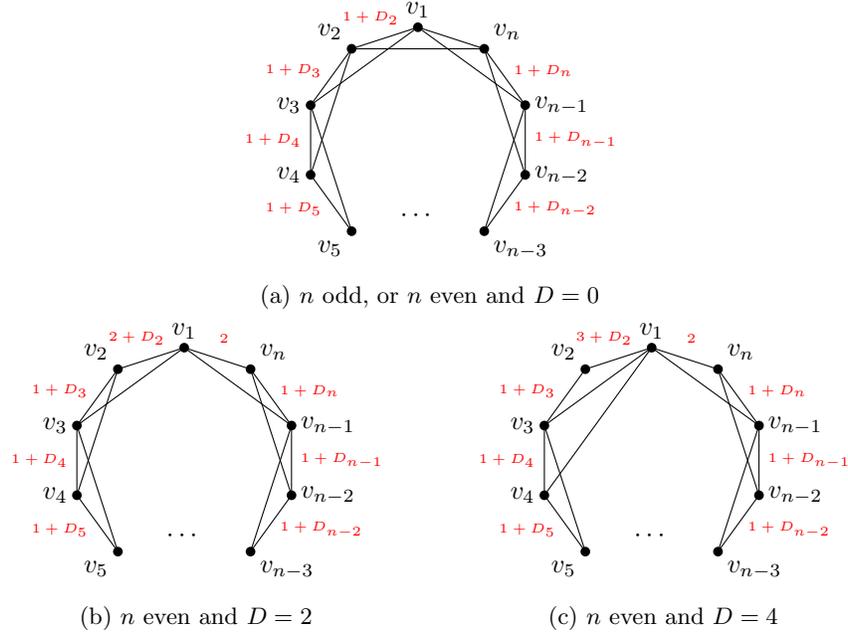
\begin{proof}
For each $i\in[n]$, let $d'_i=d_i-4$. For each $2\leq i\leq n$, let $D_i=\sum_{j=i}^n(-1)^{j-i}d'_j$, and note that $D_i\geq 0$.

If $n$ is odd, then by assumption $D=4$. Consider the multigraph $G$ on $n$ vertices $v_1,\cdots,v_n$ with edge multiplicities $m(v_i,v_j)$ given as follows (see also Figure \ref{fig:smallD1}), where we use addition mod $n$ in the indices. For each $i\in[n-1]$, let $m(v_i,v_{i+1})=1+D_{i+1}$, and let $m(v_n,v_1)=1$. For each $i\in[n]$, let $m(v_i,v_{i+2})=1$. Let every other edge in $G$ have multiplicity 0. Note that for each $2\leq i\leq n-1$, \begin{align*}
\deg(v_i)&=m(v_i,v_{i-2})+m(v_i,v_{i-1})+m(v_i,v_{i+1})+m(v_i,v_{i+2})\\
&=1+\left(1+D_i\right)+\left(1+D_{i+1}\right)+1\\
&=4+d'_i=d_i,
\end{align*}
and similarly
\begin{align*}
\deg(v_1)&=4+D_2=4+\sum_{j=2}^n(-1)^{j}d'_j=4+\sum_{j=2}^n(-1)^jd_j=4+d_1-D=d_1,\\
\deg(v_n)&=4+D_n=4+d'_n=d_n.
\end{align*}
Hence, $G$ is a multigraph with degree sequence $(d_1,\cdots,d_n)$. Furthermore, since an edge in $G$ has positive multiplicity if and only if it connects vertices whose indices have difference 1 or 2 mod $n$, we see that $G$ is triangular, completing the proof when $n$ is odd. 

If $n$ is even, then by assumption $D$ could be 0, 2 or 4. Let $G$ be the multigraph with the same definition as in the case when $n$ is odd (see also Figure \ref{fig:smallD1}). The same calculations show $\deg(v_i)=d_i$ for all $2\leq i\leq n$, while
\begin{align*}
\deg(v_1)&=4+D_2=4+(d'_2-d'_3)+\cdots+(d'_{n-2}-d'_{n-1})+d'_n\\
&=4+(d_2-d_3)+\cdots+(d_{n-2}-d_{n-1})+(d_n-4)=4+d_1-D-4=d_1-D.
\end{align*}
Hence, if $D=0$, then $G$ is a triangular multigraph with degree sequence $(d_1,\cdots,d_n)$, as required.

If $D=2$, let $G'$ be the multigraph obtained from $G$ by increasing $m(v_1,v_2)$ and $m(v_1,v_n)$ by 1, and decreasing $m(v_2,v_n)$ by 1 to 0 (see also Figure \ref{fig:smallD2}). Note that $G'$ is a multigraph with degree sequence $(d_1,\cdots,d_n)$. As edge $v_1v_2$ is in triangle $v_1v_2v_3$ and edge $v_1v_n$ is in triangle $v_1v_nv_{n-1}$, $G'$ is still triangular. 

If $D=4$, let $G''$ be the multigraph obtained from $G$ by increasing $m(v_1,v_2)$ by 2 and $m(v_1,v_n)$ by 1, increasing $m(v_1,v_4)$ from 0 to 1, and decreasing both $m(v_2,v_n)$ and $m(v_2,v_4)$ by 1 to 0 (see also Figure \ref{fig:smallD3}). Note that $G''$ is a multigraph with degree sequence $(d_1,\cdots,d_n)$. As edges $v_1v_2$ and $v_2v_3$ are in triangle $v_1v_2v_3$, edge $v_3v_4$ is in triangle $v_3v_4v_5$, edge $v_1v_4$ is in triangle $v_1v_3v_4$, and edge $v_1v_n$ is in triangle $v_1v_nv_{n-1}$, $G''$ is still triangular. This completes the proof when $n$ is even. 
\end{proof}

As a final preparation, we deal with the $n=3$ and $n=4$ cases of Theorem \ref{main} in the following lemma. 
\begin{lemma}\label{n=3,4}
If $n=3$ or $n=4$ and $(d_1,\cdots,d_n)$ is a sequence of positive integers satisfying (\ref{decreasing})-(\ref{d1atmost}), then there exists a triangular multigraph $G$ with degree sequence $(d_1,\cdots,d_n)$.
\end{lemma}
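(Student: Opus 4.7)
The plan is to reduce to Lemma~\ref{bigD} whenever possible, and handle the single remaining sub-case by hand. Let $D=\sum_{i=1}^n(-1)^{i-1}d_i$, which is non-negative by (\ref{decreasing}) and even by (\ref{even}).

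For $n=3$ we have $D = d_1 - d_2 + d_3 \geq d_3 \geq 4 > 1 = n-2$, so Lemma~\ref{bigD} applies directly and furnishes the desired triangular multigraph. For $n=4$, if $D \geq 2 = n-2$ then Lemma~\ref{bigD} again applies. The only remaining case is $n=4$ with $D = 0$, which combined with $d_1\geq d_2$ and $d_3\geq d_4$ forces $d_1=d_2$ and $d_3=d_4$.

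In this remaining sub-case I would define $G$ on vertices $v_1,v_2,v_3,v_4$ by setting $m(v_1,v_2)=d_1-2$, $m(v_3,v_4)=d_3-2$, and $m(v_i,v_j)=1$ for each of the four remaining unordered pairs. Since $d_i \geq 4$ for all $i$, every edge of $G$ has positive multiplicity; a direct computation gives $\deg(v_1)=\deg(v_2)=(d_1-2)+1+1=d_1$ and $\deg(v_3)=\deg(v_4)=(d_3-2)+1+1=d_3$, as required. Moreover, the underlying simple graph of $G$ is $K_4$, so every edge of $G$ lies in a triangle, making $G$ triangular.

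I do not anticipate any real obstacle: the only substantive task is verifying that the $n=4$, $D=0$ construction has the claimed degrees, positive multiplicities, and triangularity, which is routine; the rest is a direct appeal to Lemma~\ref{bigD}.
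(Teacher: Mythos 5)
Your proposal is correct and follows essentially the same route as the paper: reduce $n=3$ and the $n=4$, $D\geq 2$ cases to Lemma~\ref{bigD}, then handle $n=4$, $D=0$ with the identical explicit construction ($m(v_1,v_2)=d_1-2$, $m(v_3,v_4)=d_3-2$, all other pairs with multiplicity $1$). No gaps.
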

\begin{proof}
If $n=3$, then $D=d_1-d_2+d_3\geq d_3\geq 4\geq 3-2$. Hence, we may apply Lemma \ref{bigD} to find such a triangular multigraph $G$.  

If $n=4$ and $D=d_1-d_2+d_3-d_4\geq 2=4-2$, then we may again apply Lemma \ref{bigD} to find such a triangular multigraph $G$. Since $D$ is even, the only remaining case is $D=0$, which can only happen if $d_1=d_2$ and $d_3=d_4$. Consider the multigraph $G$ on $v_1,v_2,v_3,v_4$ given by $m(v_1,v_2)=d_1-2$, $m(v_3,v_4)=d_3-2$, and $m(v_1,v_3)=m(v_1,v_4)=m(v_2,v_3)=m(v_2,v_4)=1$. Then $G$ has degree sequence $(d_1,d_2,d_3,d_4)$ and is triangular, completing the proof. 
\end{proof}

We are now ready to prove Theorem \ref{main}.
\begin{figure}
\centering
\begin{subfigure}{0.4\textwidth}
\centering
\begin{tikzpicture}[scale=2]
\draw ($({-cos(60)},{sin(60)})$) node(vn-1)[inner sep=0.3ex,circle,fill=black]{};
\node [above] at ($({-cos(60)},{sin(60)})$) {$v_{n-1}$};
\draw ($({-cos(30)},{sin(30)})$) node(vn)[inner sep=0.3ex,circle,fill=black]{};
\node [above left] at ($({-cos(30)},{sin(30)})$) {$v_n$};
\draw ($({cos(30)},{sin(30)})$) node(vn-2k+1)[inner sep=0.3ex,circle,fill=black]{};
\node [above right] at ($({cos(30)},{sin(30)})$) {$v_{n-2k+1}$};
\draw ($({cos(60)},{sin(60)})$) node(vn-2k+2)[inner sep=0.3ex,circle,fill=black]{};
\node [above] at ($({cos(60)},{sin(60)})$) {$v_{n-2k+2}$};


\draw ($({-cos(45)},{sin(45)-1})$) node(v2)[inner sep=0.3ex,circle,fill=black]{};
\node [above left] at ($({-cos(45)},{sin(45)-1})$) {$v_2$};
\draw (-1,-1) node(v3)[inner sep=0.3ex,circle,fill=black]{};
\node [left] at (-1,-1) {$v_3$};
\draw ($({-cos(45)},{-sin(45)-1})$) node(v4)[inner sep=0.3ex,circle,fill=black]{};
\node [below left] at ($({-cos(45)},{-1-sin(45)})$) {$v_4$};
\draw ($({cos(45)},{-1-sin(45)})$) node(vn-2k-2)[inner sep=0.3ex,circle,fill=black]{};
\node [below right] at ($({cos(45)},{-1-sin(45)})$) {$v_{n-2k-2}$};
\draw (1,-1) node(vn-2k-1)[inner sep=0.3ex,circle,fill=black]{};
\node [right] at (1,-1) {$v_{n-2k-1}$};
\draw ($({cos(45)},{-1+sin(45)})$) node(vn-2k)[inner sep=0.3ex,circle,fill=black]{};
\node [above right] at ($({cos(45)},{-1+sin(45)})$) {$v_{n-2k}$};
\draw (0,0) node(v1)[inner sep=0.3ex,circle,fill=black]{};
\node [above] at (0,0.1) {$v_1$};
\node at (0,0.75) {$\cdots$};
\node at (0,-1.75) {$\cdots$};

\draw (v1) edge (vn);
\draw (v1) edge (vn-1);
\draw (v1) edge (v2);
\draw (v1) edge (vn-2k+2);
\draw (v1) edge (vn-2k+1);
\draw (v1) edge (vn-2k);
\draw (vn-1) edge (vn);
\draw (vn-2k+2) edge (vn-2k+1);
\draw (v2) edge (v3);
\draw (v3) edge (v4);
\draw (vn-2k-1) edge (vn-2k-2);
\draw (vn-2k) edge (vn-2k-1);
\draw (v1) edge (v3);
\draw (v2) edge (v4);
\draw (vn-2k) edge (vn-2k-2);
\draw (v1) edge (vn-2k-1);
\draw (v2) edge (vn-2k);

\end{tikzpicture}
\caption{$n$ odd}
\end{subfigure}
\begin{subfigure}{0.4\textwidth}
\centering
\begin{tikzpicture}[scale=2]
\draw ($({-cos(33.75)},{sin(33.75)})$) node(vn-1)[inner sep=0.3ex,circle,fill=black]{};
\node [above left] at ($({-cos(33.75)},{sin(33.75)})$) {$v_{n-1}$};
\draw ($({-cos(11.25)},{sin(11.25)})$) node(vn)[inner sep=0.3ex,circle,fill=black]{};
\node [left] at ($({-cos(11.25)},{sin(11.25)})$) {$v_n$};
\node [above] at ($({-cos(56.25)},{sin(56.25)})$) {$v_{n-2}$};
\draw ($({-cos(56.25)},{sin(56.25)})$) node(vn-2)[inner sep=0.3ex,circle,fill=black]{};
\node [above] at ($({-cos(78.75)},{sin(78.75)})$) {$v_{n-3}$};
\draw ($({-cos(78.75)},{sin(78.75)})$) node(vn-3)[inner sep=0.3ex,circle,fill=black]{};
\node [above] at ($({cos(78.75)},{sin(78.75)})$) {$v_{n-4}$};
\draw ($({cos(78.75)},{sin(78.75)})$) node(vn-4)[inner sep=0.3ex,circle,fill=black]{};
\draw ($({cos(11.25)},{sin(11.25)})$) node(vn-2k)[inner sep=0.3ex,circle,fill=black]{};
\node [right] at ($({cos(11.25)},{sin(11.25)})$) {$v_{n-2k}$};
\draw ($({cos(33.75)},{sin(33.75)})$) node(vn-2k+1)[inner sep=0.3ex,circle,fill=black]{};
\node [above right] at ($({cos(33.75)},{sin(33.75)})$) {$v_{n-2k+1}$};


\draw ($({-cos(45)},{sin(45)-1})$) node(v2)[inner sep=0.3ex,circle,fill=black]{};
\node [above left] at ($({-cos(45)},{sin(45)-1})$) {$v_2$};
\draw (-1,-1) node(v3)[inner sep=0.3ex,circle,fill=black]{};
\node [left] at (-1,-1) {$v_3$};
\draw ($({-cos(45)},{-sin(45)-1})$) node(v4)[inner sep=0.3ex,circle,fill=black]{};
\node [below left] at ($({-cos(45)},{-1-sin(45)})$) {$v_4$};
\draw ($({cos(45)},{-1-sin(45)})$) node(vn-2k-3)[inner sep=0.3ex,circle,fill=black]{};
\node [below right] at ($({cos(45)},{-1-sin(45)})$) {$v_{n-2k-3}$};
\draw (1,-1) node(vn-2k-2)[inner sep=0.3ex,circle,fill=black]{};
\node [right] at (1,-1) {$v_{n-2k-2}$};
\draw ($({cos(45)},{-1+sin(45)})$) node(vn-2k-1)[inner sep=0.3ex,circle,fill=black]{};
\node [above right] at ($({cos(45)},{-1+sin(45)})$) {$v_{n-2k-1}$};
\draw (0,0) node(v1)[inner sep=0.3ex,circle,fill=black]{};
\node [above right] at (0,0.1) {$v_1$};
\node at ($({0.75*cos(55.5)},{0.75*sin(55.5)})$) {$\ddots$};
\node at (0,-1.75) {$\cdots$};

\draw (v1) edge (vn);
\draw (v1) edge (vn-4);
\draw (v1) edge (vn-3);
\draw (v1) edge (vn-2);
\draw (v1) edge (vn-1);
\draw (v1) edge (v2);
\draw (v1) edge (vn-2k+1);
\draw (v1) edge (vn-2k-1);
\draw (v1) edge (vn-2k);
\draw (vn-1) edge (vn);
\draw (vn-2k+1) edge (vn-2k);
\draw (v2) edge (v3);
\draw (v3) edge (v4);
\draw (vn-2k-2) edge (vn-2k-3);
\draw (vn-2k-1) edge (vn-2k-2);
\draw (v1) edge (v3);
\draw (v2) edge (v4);
\draw (vn-2k-1) edge (vn-2k-3);
\draw (v1) edge (vn-2k-2);
\draw (v2) edge (vn-2k-1);
\draw (vn-1) edge (vn-2);
\draw (vn-3) edge (vn-4);

\end{tikzpicture}
\caption{$n$ even}
\end{subfigure}
\caption{The constructions in Theorem \ref{main}. Edge multiplicities are omitted here for simplicity.}
\label{fig:main}
\end{figure}
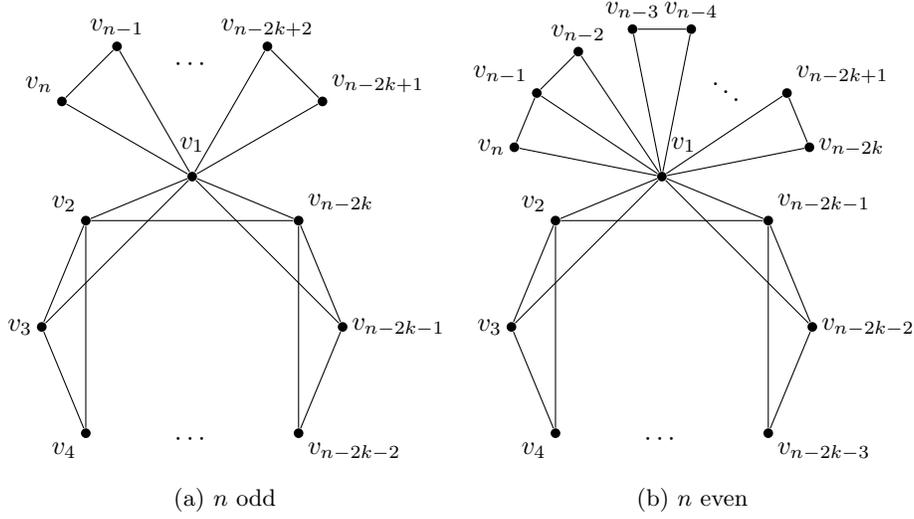
\begin{proof}[Proof of Theorem \ref{main}]
Recall that (\ref{decreasing}) and (\ref{even}) implies that $D=\sum_{i=1}^n(-1)^{i-1}d_i$ is a non-negative even integer. If $n=3$ or $n=4$, we are done by Lemma \ref{n=3,4}. Now assume $n\geq 5$. If $D\geq n-2$, we are done by Lemma \ref{bigD}. If $D\leq 4$ and $n$ is even, then $D=0,2,4$ and we are done by Lemma \ref{smallD}. If $D\leq 4$ and $n$ is odd, then since $D=(d_1-d_2)+\cdots+(d_{n-2}-d_{n-1})+d_n\geq 4$, we must have $D=4$ and so we are done by Lemma \ref{smallD} as well. Hence, it suffices to consider the case when $n\geq5$ and $6\leq D\leq n-3$, which can only happen if $n\geq 9$. Let $k=\frac12(D-4)$ and $d'_i=d_i-4$ for all $i\in[n]$. Note that $k\geq 1$ and $n-2k\geq 7$. Again, the triangular multigraph $G$ we construct differs slightly depending on the parity of $n$.

If $n$ is odd, let $(a_1,a_{n-2k+1},a_{n-2k+2},\cdots,a_n)$ and $(b_1,b_2,\cdots,b_{n-2k})$ be degree sequences defined as follows. $a_1=2k+\sum_{i=n-2k+1}^n(-1)^id_i$ and $a_i=d_i$ for all $n-2k+1\leq i\leq n$. $b_1=4+\sum_{i=2}^{n-2k}(-1)^id_i$ and $b_i=d_i$ for all $2\leq i\leq n-2k$. Then $a_{n-2k+1}\geq\cdots\geq a_n\geq4$, $a_1+\sum_{i=n-2k+1}^na_i$ is even, $a_1\leq\sum_{i=n-2k+1}^n(a_i-1)$ and $a_1+\sum_{i=n-2k+1}^n(-1)^{i-1}a_i=2k\geq(2k+1)-2$. Thus, by Lemma \ref{bigD}, there exists a triangular multigraph $G_1$ on vertices $v_1,v_{n-2k+1},\cdots,v_n$ with degree sequence $(a_1,a_{n-2k+1},\cdots,a_n)$. We also have $b_2\geq\cdots\geq b_{n-2k}\geq 4$, and $\sum_{i=1}^{n-2k}(-1)^{i-1}b_i=4$. Thus, by Lemma \ref{smallD}, there exists a triangular multigraph $G_2$ on vertices $v_1,\cdots,v_{n-2k}$ with degree sequence $(b_1,\cdots,b_{n-2k})$. Let $G=G_1\cup G_2$. Since $a_1+b_1=2k+4+\sum_{i=2}^{n}(-1)^id_i=2k+4+d_1-D=d_1$, $G$ is a multigraph with vertices $v_1,\cdots,v_n$ and degree sequence $(a_1+b_1,b_2,\cdots,b_{n-2k},a_{n-2k+1},\cdots,a_n)=(d_1,\cdots,d_n)$. Moreover, $G$ is triangular as both $G_1,G_2$ are and they only share a single vertex $v_1$. This proves the odd case. 

If $n$ is even, let $(a_1,a_{n-2k},a_{n-2k+1},\cdots,a_n)$ and $(b_1,b_2,\cdots,b_{n-2k-1})$ be degree sequences defined as follows. $a_1=2k+\sum_{i=n-2k}^n(-1)^id_i$ and $a_i=d_i$ for all $n-2k\leq i\leq n$. $b_1=4+\sum_{i=2}^{n-2k-1}(-1)^id_i$ and $b_i=d_i$ for all $2\leq i\leq n-2k-1$. Then $a_{n-2k}\geq\cdots\geq a_n\geq4$, $a_1+\sum_{i=n-2k}^na_i$ is even, $a_1\leq\sum_{i=n-2k}^n(a_i-1)$ and $a_1+\sum_{i=n-2k}^n(-1)^{i-1}a_i=2k\geq(2k+2)-2$. Thus, by Lemma \ref{bigD}, there exists a triangular multigraph $G_1$ on vertices $v_1,v_{n-2k},\cdots,v_n$ with degree sequence $(a_1,a_{n-2k},\cdots,a_n)$. We also have $b_2\geq\cdots\geq b_{n-2k-1}$ and $\sum_{i=1}^{n-2k-1}(-1)^{i-1}b_i=4$. Thus, by Lemma \ref{smallD}, there exists a triangular multigraph $G_2$ on vertices $v_1,\cdots,v_{n-2k-1}$ with degree sequence $(b_1,\cdots,b_{n-2k-1})$. Let $G=G_1\cup G_2$. Since $a_1+b_1=2k+4+\sum_{i=2}^{n}(-1)^id_i=2k+4+d_1-D=d_1$, $G$ is a multigraph with vertices $v_1,\cdots,v_n$ and degree sequence $(a_1+b_1,b_2,\cdots,b_{n-2k-1},a_{n-2k},\cdots,a_n)=(d_1,\cdots,d_n)$. Moreover, $G$ is triangular as both $G_1,G_2$ are and they only share a single vertex $v_1$. This proves the even case. 
\end{proof}

\section{Acknowledgements}
We would like to thank Domenico Mergoni Cecchelli for bringing Conjecture \ref{conj} to our attention during a workshop, and Amedeo Sgueglia, Kyriakos Katsamaktsis and Shoham Letzter for organising said workshop in University College London.

\bibliographystyle{abbrv}
\bibliography{bibliography}

\end{document}